\newcommand{\Claim}[1]{\noindent\textbf{Claim. }\textit{#1}\\}
\newcommand{\ClaimN}[2]{\noindent\textbf{Claim #1. }\textit{#2}\\}
\newtheorem{theorem}{Theorem}[section]
\newtheorem{lemma}[theorem]{Lemma}
\theoremstyle{definition}
\newtheorem{definition}[theorem]{Definition}
\newcommand{\D}{\mathcal{D}}
\newcommand{\DV}{\mathcal{D}_\mathcal{V}}
\newcommand{\DW}{\mathcal{D}_\mathcal{W}}
\newcommand{\DVW}{\mathcal{D}_{\mathcal{VW}}}
\newenvironment{proofc}{\noindent\textit{Proof of Claim.}}{\\}
\newenvironment{proofN}[1]{\noindent\textit{Proof of #1.}}{\hfill$\square$\\}
\newcommand{\Case}[1]{\textbf{Case #1.}}
\newcommand{\V}{\mathcal{V}}
\newcommand{\W}{\mathcal{W}}
\begin{document}

\title[]{A topologically minimal, weakly reducible, unstabilized Heegaard splitting of genus three is critical}

\author{Jungsoo Kim}
\date{March 19, 2015}

\begin{abstract}
	Let $(\V,\W;F)$ be a weakly reducible, unstabilized, genus three Heegaard splitting in an orientable, irreducible $3$-manifold $M$.
	In this article, we prove that either the disk complex $\D(F)$ is contractible or $F$ is critical.
	Hence, the topological index of $F$ is two if $F$ is topologically minimal.
\end{abstract}

\address{\parbox{4in}{
	BK21 PLUS SNU Mathematical Sciences Division,\\ Seoul National University\\ 
	1 Gwanak-ro, Gwanak-Gu, Seoul 151-747, Korea}} 
	
\email{pibonazi@gmail.com}
\subjclass[2000]{57M50}

\maketitle

\section{Introduction and Result}

Throughout this paper, all surfaces and 3-manifolds will be taken to be compact and orientable.
In \cite{Bachman2002}, Bachman introduced the concept a ``\textit{critical surface}'' and proved that a critical surface intersects an incompressible surface so that the intersection of them is essential on both surfaces up to isotopy in an irreducible manifold, where it is a common property for a strongly irreducible Heegaard surface (see \cite{Schultens2000}) and a critical surface.
In \cite{Bachman2008}, he generalized the definition of critical surface for the proof of Gordon's Conjecture by using the notations coming from the standard disk complex.
Moreover, he defined the concept a ``\textit{topologically minimal surface}'', which includes incompressible surfaces, strongly irreducible surfaces, critical surfaces, and so on, and the topologically minimal surfaces are  distinguished by the ``\textit{topological index}'' \cite{Bachman2010}.
Indeed, he proved that a topologically minimal surface also intersects an incompressible surface so that the intersection of them is essential on both surfaces up to isotopy in an irreducible manifold.
He also found the counterexamples of the Stabilization Conjecture by the method using this concept in \cite{Bachman2013}.
In \cite{4} \cite{5} \cite{6} , he proved that there is a resemblance between a topologically minimal surface and a geometrically minimal surface.

In recent results including the author's works, several examples of critical Heegaard surfaces were found and most of them are easily constructible \cite{BachmanJohnson2010} \cite{JHLee2013} \cite{LeiQiang2013} \cite{JungsooKim2012} \cite{JungsooKim2013}.
Hence, it is now guessed that it would be more easier for a weakly reducible surface to be topologically minimal than not to be topologically minimal.
Indeed, the condition that the disk complex is non-contractible for a topologically minimal surface seems to be more easier than the condition that the disk complex is contractible.

In this article, we will prove the following theorem giving an evidence to an affirmative answer for this question.

\begin{theorem}[Theorem \ref{theorem-main-a}]\label{theorem-main}
Let $(\V,\W;F)$ be a weakly reducible, unstabilized, genus three Heegaard splitting in an orientable, irreducible $3$-manifold $M$.
If every weak reducing pair of $F$ gives the same generalized Heegaard splitting after weak reduction and the embedding of each thick level in the relevant compression body is also unique up to isotopy, then the disk complex $\D(F)$ is contractible.
Otherwise, $F$ is critical.
\end{theorem}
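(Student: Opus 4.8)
The plan is to establish the two alternatives separately, driven by a common construction: the generalized Heegaard splitting produced by Scharlemann--Thompson untelescoping along a weak reducing pair. To a weak reducing pair $(V,W)$ of $F$ I associate the generalized Heegaard splitting $H(V,W)$ obtained this way, recorded together with the isotopy classes of its thick levels inside the two compression bodies that $M$ is cut into by the thin level; this ``strong'' record is exactly the invariant named in the hypothesis of the first alternative. Alongside $\D(F)$ I work with its \emph{weak reduction part} $\WR\subseteq\D(F)$, the full subcomplex spanned by the simplices whose disk set meets both sides of $F$ (so that its edges include the weak reducing pairs), and the auxiliary graph whose vertices are the weak reducing pairs, with an edge whenever two of them lie in a common simplex of $\WR$. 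Since $\operatorname{genus}(F)=3$, the thin level of any weak reduction and the simplices of $\WR$ range over a short explicit list, which I would record first from the structure theory of weakly reducible genus three splittings.

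The crux is a genus-three structural lemma: $H(V,W)$ depends only on the connected component of $(V,W)$ in the graph of weak reducing pairs, and two weak reducing pairs in different components have distinct $H(\cdot)$. For the first half it is enough to treat a single elementary move --- exchanging one disk of a weak reducing pair for a disjoint disk --- and to check that this leaves the untelescoping unchanged up to isotopy, including the embeddings of the thick levels; in genus three this reduces to a finite analysis of how two compressing disks of $F$ can lie relative to the possible thin levels, and the second half is similar. The clean consequence is the dichotomy in the theorem: every weak reducing pair of $F$ gives the same strong $H(\cdot)$ if and only if the graph of weak reducing pairs --- equivalently $\WR$ --- is connected.

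Suppose first that not all weak reducing pairs give the same $H(\cdot)$. By the lemma the graph of weak reducing pairs is disconnected, with components $\mathcal M_0,\mathcal M_1,\dots$; fix $\mathcal M_0$, let $C_0$ be the set of compressing disks of $F$ appearing in some pair of $\mathcal M_0$, and let $C_1$ be its complement (which contains every disk belonging to no weak reducing pair). Then $C_0$ contains both disks of a pair in $\mathcal M_0$ and $C_1$ contains both disks of a pair in $\mathcal M_1$, so the first requirement for criticality --- each part contains a weak reducing pair --- holds. For the second --- a disk of $C_0$ in $\V$ and a disk of $C_1$ in $\W$ must intersect --- suppose $V\subset\V$ and $W\subset\W$ were disjoint with $V\in C_0$ and $W\in C_1$; then $(V,W)$ is a weak reducing pair lying in the component of $V$, namely $\mathcal M_0$, forcing $W\in C_0$, a contradiction. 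As $\D(F)\neq\varnothing$ since $F$ is weakly reducible, the bipartition $C_0\sqcup C_1$ shows $F$ is critical, hence $F$ has topological index two whenever it is topologically minimal.

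Suppose instead that every weak reducing pair gives the same strong $H(\cdot)$, so $\WR$ is connected; then I would show $\D(F)$ is contractible. First I would prove $\WR$ is contractible by deformation retracting it onto the star of a chosen weak reducing pair, using the rigidity of the common generalized Heegaard splitting $H$ --- its thin level constrains which disks can be mutually disjoint and in what configurations --- which in genus three is again a bounded case check. Then, since $\D(\V)$ and $\D(\W)$ are contractible (McCullough), covering $\D(F)$ by regular neighborhoods of these two subcomplexes gives two open pieces homotopy equivalent to $\D(\V)$ and $\D(\W)$ whose intersection is governed by $\WR$, and a Mayer--Vietoris and van Kampen argument then yields that $\D(F)$ is contractible. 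The step I expect to be the main obstacle is precisely the genus-three structural lemma together with the contractibility of $\WR$: both reduce to controlling how an arbitrary compressing disk of $F$ meets the thin level of a weak reduction --- in particular, showing that a disk not destroyed by a weak reduction descends to a compressing disk of one of the resulting pieces, and that two weak reducing pairs differing by an elementary move yield isotopic untelescopings. This is where the hypotheses do the work: $\operatorname{genus}(F)=3$ to keep every relevant list finite and explicit, and $F$ unstabilized to exclude hidden destabilizations, and the degenerate disk intersections accompanying them, that would corrupt the generalized Heegaard splitting. I would expect most of the paper to consist of this genus-three analysis.
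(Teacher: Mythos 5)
Your overall architecture coincides with the paper's: your graph of weak reducing pairs is exactly Bachman's distance relation (Definition \ref{definition-distance-wrp}), the elementary-move invariance of the untelescoping is Lemma \ref{lemma-2-17}, and the contractibility half proceeds by pinning down $\DVW(F)$ and gluing with McCullough's theorem. The genuine gap is in your verification of criticality from disconnectedness. You set $C_0$ equal to the disks appearing in some pair of $\mathcal M_0$ and claim that a disjoint pair $(V,W)$ with $V\in C_0$ ``lies in the component of $V$, namely $\mathcal M_0$.'' This presupposes that all weak reducing pairs containing a given disk lie in one component, which does not follow: two pairs $(V,W_0)$ and $(V,W_1)$ sharing the disk $V$ span a common simplex only when $W_0\cap W_1=\emptyset$, so a single disk can a priori occur in pairs from different components. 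For the same reason your assertion that the complement $C_1$ contains both disks of a pair of $\mathcal M_1$ is unjustified: every pair of $\mathcal M_1$ might share a disk with some pair of $\mathcal M_0$. As written, both criticality axioms can fail for your partition. The repair is not to re-derive Bachman's criterion but to invoke it: Lemma 8.5 of \cite{Bachman2008} (Lemma \ref{lemma-3-3} here) states directly that two weak reducing pairs at infinite distance force $F$ to be critical, and that is what the paper does.

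A second caveat: the half of your structural lemma asserting that pairs in \emph{different} components have \emph{distinct} $H(\cdot)$ is stronger than you need and is not what gets proved; what is needed, and what the paper proves, is the global statement that if \emph{every} pair yields the same strong $H(\cdot)$ then $\DVW(F)$ is connected --- indeed star-shaped about a canonical pair $(\bar V,\bar W)$, or a single facial cluster, or a single edge, according to the shape of the inner thin level. The mechanism you gloss over as ``a bounded case check'' is the uniqueness of the relevant compressing disk in the genus three compression body lying between $F$ and the fixed thick level (Lemma \ref{lemma-2-19}): the hypothesis that the thick level embeddings are unique up to isotopy in $\V$ and $\W$ converts, via that uniqueness, into the uniqueness of a distinguished weak reducing pair, and every simplex of $\DVW(F)$ is then forced to contain it. The paper also isolates (Lemma \ref{lemma-new}) the case where two untelescopings are isotopic in $M$ but their thick levels are not isotopic within $\V$; your ``strong record'' anticipates this, but showing such pairs are at infinite distance needs the separate observation that a connecting chain must pass through a $\V$-face, which is impossible when the $\V$-disk cuts off $(\text{torus})\times I$ from $\V$.
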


Note that the author proved that if a weakly reducible, unstabilized, genus three Heegaard splitting in an orientable, irreducible $3$-manifold is topologically minimal, then the topological index is at most four in \cite{JungsooKim2013} and Theorem \ref{theorem-main} improves the upper bound of the topological index, i.e. the topological index is two if $F$ is topologically minimal.
Since there exist many unstabilized critical Heegaard surfaces of genus three, this upper bound is sharp.

\section{Preliminaries\label{section2}}

\begin{definition}
A \textit{compression body} (\textit{generalized compression body} resp.) is a $3$-manifold which can be obtained by starting with some closed, orientable, connected surface $F$, forming the product $F\times I$, attaching some number of $2$-handles to $F\times\{1\}$ and capping off all (some resp.) resulting $2$-sphere boundary components that are not contained in $F\times\{0\}$ with $3$-balls. 
The boundary component $F\times\{0\}$ is referred to as $\partial_+$. 
The rest of the boundary is referred to as $\partial_-$. 
\end{definition}

\begin{definition}
A \textit{Heegaard splitting} of a $3$-manifold $M$ is an expression of $M$ as a union $\V\cup_F \W$, denoted   as $(\V,\W;F)$ (or $(\V,\W)$ if necessary),  where $\V$ and $\W$ are compression bodies that intersect in a transversally oriented surface $F=\partial_+\V=\partial_+\W$. 
We say $F$ is the \textit{Heegaard surface} of this splitting. 
If $\V$ or $\W$ is homeomorphic to a product, then we say the splitting  is \textit{trivial}. 
If there are compressing disks $V\subset \V$ and $W\subset \W$ such that $V\cap W=\emptyset$, then we say the splitting is \textit{weakly reducible} and call the pair $(V,W)$ a \textit{weak reducing pair}. 
If $(V,W)$ is a weak reducing pair and $\partial V$ is isotopic to $\partial W$ in $F$, then we call $(V,W)$ a \textit{reducing pair}.
If the splitting is not trivial and we cannot take a weak reducing pair, then we call the splitting \textit{strongly irreducible}. 
If there is a pair of compressing disks $(\bar{V},\bar{W})$ such that $\bar{V}$ intersects $\bar{W}$ transversely in a point in $F$, then we call this pair a \textit{canceling pair} and say the splitting is \textit{stabilized}. 
Otherwise, we say the splitting is \textit{unstabilized}.
\end{definition}

\begin{definition}
Let $F$ be a surface of genus at least two in a compact, orientable $3$-manifold $M$. 
Then the \emph{disk complex} $\D(F)$ is defined as follows: 
\begin{enumerate}[(i)]
\item Vertices of $\D(F)$ are isotopy classes of compressing disks for $F$.
\item A set of $m+1$ vertices forms an $m$-simplex if there are representatives for each
that are pairwise disjoint.
\end{enumerate}
\end{definition}

\begin{definition}[Bachman, \cite{Bachman2010}]
The \emph{homotopy index} of a complex $\Gamma$ is defined to be 0 if $\Gamma=\emptyset$, and the smallest $n$ such that $\pi_{n-1}(\Gamma)$ is non-trivial, otherwise. 
We say a separating surface $F$ with no torus components is \emph{topologically minimal} if its disk complex $\D(F)$ is either empty or non-contractible. 
When $F$ is topologically minimal, we say its \emph{topological index} is the homotopy index of $\D(F)$. 
If $F$ is topologically minimal and its topological index is two, then we call $F$ a \textit{critical surface}. 
\end{definition}

Note that Bachman originally defined a \textit{critical surface} in a different way in \cite{Bachman2008} and proved it is equivalent to being index two in \cite{Bachman2010}.

\begin{definition}
Consider a Heegaard splitting $(\V,\W;F)$ of an orientable, irreducible $3$-manifold $M$. 
Let $\DV(F)$ and $\DW(F)$ be the subcomplexes of $\D(F)$ spanned by compressing disks in $\V$ and $\W$ respectively (see Chapter 5 of \cite{8}).
We call these subcomplexes \textit{the disk complexes of $\V$ and $\W$}.
Let $\DVW(F)$ be the subset  of $\D(F)$ consisting of the simplices having at least one vertex from $\DV(F)$ and at least one vertex from $\DW(F)$.
\end{definition}

\begin{theorem}[McCullough, Theorem 5.3 of \cite{8}]\label{theorem-mc}
$\DV(F)$ and $\DW(F)$ are contractible.
\end{theorem}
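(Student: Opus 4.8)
Theorem \ref{theorem-mc} is classical, so I will only indicate the approach one would take. It is enough to treat $\DV(F)$, the argument for $\DW(F)$ being identical. Note first that $\DV(F)\neq\emptyset$, since $(\V,\W;F)$ is weakly reducible and hence $\V$ carries a compressing disk. The plan is to show $\pi_n(\DV(F))=0$ for every $n\geq 0$; since $\DV(F)$ is a simplicial complex, Whitehead's theorem then upgrades this to contractibility. Fix a reference vertex $E\in\DV(F)$. The basic tool is an outermost-disk surgery along $E$: applied to a compressing disk $D$ of $\V$ with $D\cap E\neq\emptyset$, it produces compressing disks disjoint from $D$ whose geometric intersection with $E$ is strictly smaller.

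Given a map $f\colon S^n\to\DV(F)$, I would first make $f$ simplicial on some triangulation of $S^n$, so that only finitely many disks arise as images of vertices, and choose representatives realizing the image simplices and meeting $E$ minimally. Because $M$, and hence $\V$, is irreducible, all circle components of the intersections with $E$ can be removed by innermost-disk surgery, so every image disk meets $E$ only in arcs. If in this position every image disk is disjoint from $E$, then --- using the standard fact that disks with pairwise zero geometric intersection can be made pairwise disjoint --- the vertex $E$ may be adjoined to every image simplex, so $f$ maps into the closed star $\overline{\mathrm{st}(E)}=E\ast\mathrm{lk}(E)$, which is a cone; hence $f$ extends over $D^{n+1}$. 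This is the base case.

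Otherwise I would induct on a complexity recording the total (or maximal) geometric intersection of the image disks with $E$, minimized over the homotopy class of $f$, the triangulation, and the choice of representatives. Pick an image disk $D$ with $|D\cap E|$ maximal, take an arc of $D\cap E$ that is outermost in $E$, let $E_0\subset E$ be the subdisk it cuts off, and surger $D$ across $E_0$. This replaces $D$ by two disks $D^{+},D^{-}$, each disjoint from $D$ and each meeting $E$ in strictly fewer arcs, at least one of which is an essential disk (otherwise $\partial D$ would bound a disk in $F$, contradicting that $D$ is a vertex). One then modifies $f$ on the stars of the vertices mapping to $D$, splicing in $D^{+}$ and $D^{-}$, to obtain a homotopic map of strictly smaller complexity, and iterates down to the base case.

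The crux --- and the reason this is not a two-line argument --- is the coherence of that last step: although $D^{\pm}$ is disjoint from $D$, it need not be disjoint from the remaining vertices of a simplex through $D$, so one cannot substitute disk by disk. The standard remedy, which is the technical core of McCullough's proof, is a ``bad simplex'' induction in the spirit of Hatcher's surgery arguments for sphere complexes: perform the outermost-disk surgeries along $E$ on all image disks meeting $E$ at once (these can be realized by mutually disjoint moves), order the simplices carrying a disk that meets $E$, and rebuild $f$ over their stars one simplex at a time so that the complexity genuinely decreases. Managing this bookkeeping is the real content; the rest is the routine machinery above.
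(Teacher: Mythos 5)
Theorem \ref{theorem-mc} is not proved in this paper at all: it is quoted as Theorem 5.3 of McCullough \cite{8}, so there is no in-paper argument to measure yours against. Your outline is the standard surgery proof of that classical fact --- fix a reference disk $E$, use irreducibility of the compression body to eliminate circle components of intersection, and push a sphere in $\DV(F)$ into the contractible star of $E$ by repeated outermost-arc surgery --- and this is essentially the route taken in the cited source (and in Cho's later streamlined treatment for handlebodies). You also correctly identify the genuine difficulty: a surgered disk $D^{\pm}$, while disjoint from $D$, need not be disjoint from the other vertices of a simplex containing $D$, so the replacement must be organized by a simultaneous-surgery / ``bad simplex'' induction rather than vertex by vertex. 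Two caveats keep this a roadmap rather than a proof. First, that coherence step is exactly the technical core, and you leave it as a named black box; as written, nothing verifies that the complexity you minimize over homotopy class, triangulation, and representatives actually strictly decreases after the splice. Second, your base case silently uses the lemma that essential disks which are pairwise isotopic off of each other admit simultaneously pairwise disjoint representatives (equivalently, that $\DV(F)$ is a flag complex and $E$ can be coned onto every image simplex at once); this is true but requires its own innermost/outermost argument. Neither point is a wrong turn, but both would have to be carried out for this to stand as a proof independent of \cite{8}.
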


Note that $\D(F)=\DV(F)\cup \DVW(F)\cup \DW(F)$.

From now on, we will consider only unstabilized Heegaard splittings of an irreducible $3$-manifold. 
If a Heegaard splitting of a compact $3$-manifold is reducible, then the manifold is reducible or the splitting is stabilized (see \cite{SaitoScharlemannSchultens2005}).
Hence, we can exclude the possibilities of reducing pairs among weak reducing pairs.

\begin{definition}
Suppose $W$ is a compressing disk for $F\subset M$. 
Then there is a subset of $M$ that can be identified with $W\times I$ so that $W=W\times\{\frac{1}2\}$ and $F\cap(W\times I)=(\partial W)\times I$. 
We form the surface $F_W$, obtained by \textit{compressing $F$ along $W$}, by removing $(\partial W)\times I$ from $F$ and replacing it with $W\times(\partial I)$. 
We say the two disks $W\times(\partial I)$ in $F_W$ are the $\textit{scars}$ of $W$. 
\end{definition}

\begin{lemma}[Lustig and Moriah, Lemma 1.1 of \cite{7}] \label{lemma-2-8}
Suppose that $M$ is an irreducible $3$-manifold and $(\V,\W;F)$ is an unstabilized Heegaard splitting of $M$. 
If $F'$ is obtained by compressing $F$ along a collection of pairwise disjoint disks, then no $S^2$ component of $F'$ can have scars from disks in both $\V$ and $\W$. 
\end{lemma}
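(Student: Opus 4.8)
The plan is to argue by contradiction. Suppose some $S^2$ component $S$ of $F'$ carries scars both from a disk of $\V$ and from a disk of $\W$. I will exhibit a reducing pair or a canceling pair for $(\V,\W;F)$; since a canceling pair makes the splitting stabilized and a reducing pair cannot occur for an unstabilized splitting of an irreducible manifold (see the discussion following Theorem~\ref{theorem-mc}), either one gives the desired contradiction.

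The first ingredient is bookkeeping of scars. Write $\Delta$ for the given collection of disks. The surface $S\cap F$ is $S$ with the scar disks lying on $S$ deleted, so it is a planar surface $P$ — an $n$-holed sphere, where $n\ge 2$ is the number of scars on $S$ — and each boundary circle of $P$ bounds a scar disk, hence is isotopic in $F$ to $\partial D$ for some $D\in\Delta$. By hypothesis at least one such $D$ lies in $\V$ and at least one in $\W$, and I call the boundary circles of $P$ accordingly $\V$-circles and $\W$-circles. If $n=2$ we are already done: $P$ is then an embedded annulus in $F$ whose two boundary circles are a $\V$-circle isotopic to $\partial V$ (for some $V\subset\V$ in $\Delta$) and a $\W$-circle isotopic to $\partial W$ (for some $W\subset\W$ in $\Delta$), these not being scars of a common disk since $S$ sees both sides; hence $\partial V$ and $\partial W$ are isotopic in $F$ and $(V,W)$ is a reducing pair.

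For general $n$ I invoke irreducibility: $S$ bounds a $3$-ball $B\subset M$, which I may take innermost, so that $\operatorname{int}B$ is disjoint from $F'$. The crucial geometric observation is that compressing $F$ along a disk $D\subset\V$ pushes a product neighborhood $N(D)\cong D\times I$ of $D$ from the $\V$-side of the surface across to the $\W$-side (and symmetrically for $D\subset\W$). Since $S$ separates the two local sides of $F'$, after interchanging $\V$ and $\W$ if necessary we may assume that $B$ lies on the $\W$-side of $S$; then every disk $V$ of $\Delta$ in $\V$ with a scar on $S$ has $N(V)$ — and hence $V$ itself — inside $B$, while every disk $W$ of $\Delta$ in $\W$ with a scar on $S$ has $N(W)$ outside $B$. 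Thus $B$ is a ball whose boundary sphere $S$ is tiled by the planar piece $P\subset F$ and the scar disks, with the neighborhoods $N(V)$ sitting inside $B$ as tubes or fingers attached along the $\V$-circles. Decomposing $B$ along these pieces, I will read off either a compressing disk $\bar W\subset\W$ with $\partial\bar W$ isotopic on $F$ to $\partial V$ for one of the $V$'s — so $(V,\bar W)$ is a reducing pair — or, when one of these tubes joins two scars both lying on $S$, a disk meeting that $V$ transversely in a single point of $F$, giving a canceling pair. Either way the hypotheses are contradicted, and the lemma follows.

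The step I expect to be the main obstacle is this last extraction from the ball $B$: one must treat separately a disk of $\Delta$ contributing two scars to $S$ (so that $N(V)$ is an honest tube rather than a finger), and one must check that the disk read off inside $B$ is a genuine essential compressing disk for the appropriate compression body rather than a boundary-parallel one — which is where irreducibility of $M$ and the essentiality of the curves $\partial V$ enter once more. The remaining verifications are routine cutting and pasting.
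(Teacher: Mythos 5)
The paper does not prove this lemma---it is quoted verbatim from Lustig--Moriah \cite{7}---so your argument has to stand on its own. Your two-scar case does: if $P=S\cap F$ is an annulus whose boundary circles are a scar circle of some $V\subset\V$ and a scar circle of some $W\subset\W$, then $\partial V$ and $\partial W$ are isotopic in $F$, so the disjoint pair $(V,W)$ is a reducing pair, which is excluded for an unstabilized splitting of an irreducible manifold by the Haken--Waldhausen fact the paper already quotes. Your bookkeeping of which plugs $N(D)=D\times I$ lie inside the ball $B$ bounded by $S$ is also correct: if $B$ is on the $\W$-side then the $\V$-plugs with scars on $S$ lie in $B$ and the $\W$-plugs lie outside.

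The general case, however, has a genuine gap in two places. First, the assertion that you ``may take $B$ innermost, so that $\operatorname{int}B$ is disjoint from $F'$'' is unjustified and in general false: $B$ is whichever side of $S$ happens to be a ball, and you have no freedom to choose it away from the other components of $F'$. Concretely, if some $V\in\Delta\cap\V$ has only one of its two scars on $S$, then $V\times I\subset B$ drags the component of $F'$ containing the other scar $V\times\{1\}$ into $\operatorname{int}B$, so the picture of $B$ being tiled only by $P$, the scar disks, and honest tubes $N(V)$ breaks down. Second, and more seriously, the step that would actually finish the proof---``decomposing $B$ along these pieces, I will read off either a compressing disk $\bar W\subset\W$ with $\partial\bar W$ isotopic to $\partial V$ \dots or a disk meeting $V$ transversely in a single point''---is not an argument but a restatement of the goal, as you yourself flag; this is precisely where the content of the lemma lives, so what you have is a plan rather than a proof. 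A workable completion does not pass through an innermost ball in $M$ at all: one reduces the number of scars on $S$ by an induction, e.g.\ banding together two scars of the same type along an arc of $P$ whose interior misses all scars (such an arc exists because $P$ is connected and planar), which trades the disk collection for one producing a sphere with fewer scars but still with scars from both sides; the induction bottoms out at your two-scar case, i.e.\ at a reducing sphere, and the quoted reducible-implies-stabilized fact then gives the contradiction.
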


\begin{lemma}[J. Kim, Lemma 2.9 of \cite{JungsooKim2013}]\label{lemma-2-9}
Suppose that $M$ is an irreducible $3$-manifold and $(\V,\W;F)$ is a genus three, unstabilized Heegaard splitting of $M$. 
If there exist three mutually disjoint compressing disks $V$, $V'\subset\V$ and $W\subset \W$, then either $V$ is isotopic to $V'$, or one of $\partial V$ and $\partial V'$ bounds a punctured torus $T$ in $F$ and  the other is a non-separating loop in $T$. 
Moreover, we cannot choose three weak reducing pairs $(V_0, W)$, $(V_1,W)$, and $(V_2,W)$ such that $V_i$ and $V_j$ are mutually disjoint and non-isotopic in $\V$ for $i\neq j$. 
\end{lemma}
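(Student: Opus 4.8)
\medskip

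\noindent The approach I would take is to prove the two assertions separately: the first by analysing how two disjoint non-isotopic compressing disks sit in the genus-three handlebody $\V$, the second by a short combinatorial argument built on the first. Two preliminary reductions for the first assertion: a compressing disk of $\V$ is determined up to isotopy by its boundary (its union with a second such disk, together with an annulus in $F$, is a sphere bounding a ball in the irreducible $\V$), so $V\not\simeq V'$ forces $\partial V\not\simeq\partial V'$ on $F$; and neither $(V,W)$ nor $(V',W)$ is a reducing pair, since a reducing pair would make $F$ a reducible Heegaard surface in the irreducible $M$, hence stabilized by \cite{SaitoScharlemannSchultens2005}, contrary to hypothesis. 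Now cut $\V$ along $V\cup V'$ and track the genus through the two cuts: because the cuts are along essential disks, the only outcomes are (i) a disjoint union of one, two, or three solid tori, or (ii) a $3$-ball together with a genus-two handlebody, the $3$-ball arising only by cutting a solid-torus summand of $\V$ along its meridian. In case (ii), one of $V$, $V'$ — say $V$ — is a separating disk of $\V$ cutting off a solid torus whose $\partial_+$ is a once-punctured torus $T\subset F$, and $V'$ is then a meridian disk of that solid torus, so $\partial V$ bounds $T$ in $F$ and $\partial V'$ is a non-separating loop in $T$; this is exactly the asserted conclusion.

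It remains to exclude case (i). Suppose $\V$ cut along $V\cup V'$ is a union of solid tori. Since $\partial W$ is disjoint from $\partial V\cup\partial V'$, it lies on the $\partial_+$-part of one such piece $S$, a subsurface of the torus $\partial S$ with the scar disks deleted; moreover $\partial W$ bounds a disk in $\V$ if and only if it bounds one in $S$ (an innermost-disk argument removes intersections with $V\cup V'$). If $\partial W$ is inessential on $\partial S$, or is the meridian slope of $S$, then it bounds a disk in $S$, hence in $\V$, forcing either $\partial W$ inessential in $F$ (impossible, $W$ being a compressing disk) or a reducing pair (excluded). Otherwise write $\partial W=p\mu+q\lambda$ on $\partial S$ with $\mu$ the meridian, so $q\neq0$; the meridian disk $D$ of $S$, isotoped within $\partial S$ off the scars, is a compressing disk of $\V$ (its boundary $\mu$ is essential on $\partial S$, hence in $F$) meeting $W$ in exactly $|q|$ points on $F$. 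If $|q|=1$, then $(D,W)$ is a canceling pair, so $F$ is stabilized. If $|q|\geq2$, then $N(S\cup W)$ is a solid torus with a $2$-handle attached along the slope $p\mu+q\lambda$; its boundary is a $2$-sphere, and capping it off with a ball yields the lens space $L(q,p)\neq S^3$, so irreducibility of $M$ forces $M=L(q,p)$ (the $2$-sphere bounds a ball on the other side) — but a lens space admits no unstabilized Heegaard splitting of genus three, a contradiction. Thus case (i) cannot occur. This exclusion is the part I expect to be the main obstacle, being where the unstabilized hypothesis is genuinely used, via the canceling-pair/lens-space dichotomy together with the classification of Heegaard splittings of lens spaces; the other ingredients (a disk determined by its boundary, the intersection count, uniqueness of the meridian of a solid torus) are routine.

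For the ``moreover'' part, let $(V_0,W),(V_1,W),(V_2,W)$ be weak reducing pairs with the $V_i$ pairwise disjoint and pairwise non-isotopic. Applying the first assertion to $(V_i,V_j,W)$ for each $i\neq j$: one of $\partial V_i$, $\partial V_j$ bounds a once-punctured torus $T$ in $F$ with the other non-separating in $T$; since such a boundary curve is separating, one of $V_i$, $V_j$ — say $V_i$ — is a separating disk of $\V$ cutting off a solid torus $H$ with $\partial_+H=T$, and then $V_j$ is a meridian disk of $H$; write $i\to j$. As $i\to j$ makes $\partial V_i$ separating and $\partial V_j$ non-separating in $F$, one never has both $i\to j$ and $j\to i$, so $\to$ orients each edge of the triangle on $\{0,1,2\}$ exactly once. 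If $i\to j$ and $i\to k$ with $j\neq k$, then — the punctured torus bounded by the separating curve $\partial V_i$ in the genus-three $F$ being unique — $V_j$ and $V_k$ are both meridian disks of the same solid torus, hence isotopic, a contradiction; so every out-degree is $\leq1$. Since the out-degrees sum to $3$, each equals $1$ and $\to$ is a directed $3$-cycle, say $0\to1\to2\to0$; but then $0\to1$ makes $\partial V_1$ non-separating in $F$ while $1\to2$ makes it separating — a contradiction. Hence no such three weak reducing pairs exist.

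Finally, when $\V$ or $\W$ is a compression body with nonempty $\partial_-$ rather than a handlebody, the reductions and the key sub-argument of the second paragraph go through essentially unchanged, but the genus bookkeeping that classifies the pieces of $\V$ cut along $V\cup V'$ must be redone with the incompressible $\partial_-$-surfaces taken into account.
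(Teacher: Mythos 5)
Your handlebody argument for the first assertion is correct (the classification of $\V$ cut along $V\cup V'$ into solid tori versus ball-plus-genus-two is right, and the canceling-pair/lens-space dichotomy does exclude the solid-torus cases), and the ``moreover'' part --- orienting $K_3$ so that every out-degree is forced to equal $1$, hence a directed $3$-cycle, hence $\partial V_1$ both separating and non-separating --- is a clean and correct deduction from the first assertion. Note that the paper does not prove this lemma but imports it from \cite{JungsooKim2013}, so the comparison is with the proof there, which runs entirely through Lemma \ref{lemma-2-8}. The genuine gap is your final paragraph. The splitting lives in a general irreducible $M$, so $\V$ may have $\partial_-\V\neq\emptyset$ (the paper's figures use this essentially), and your claim that ``the key sub-argument of the second paragraph goes through essentially unchanged'' fails concretely: when $\partial_-\V\neq\emptyset$ the piece $S$ of $\V$ cut along $V\cup V'$ carrying $\partial W$ can be $(\text{torus})\times I$ rather than a solid torus (e.g.\ $\partial_-\V$ a torus with $V$, $V'$ non-separating and $V\cup V'$ non-separating), and then there is no meridian disk $D$; the entire inessential/meridian/$(p,q)$ trichotomy, which is built on $D$, has nothing to attach to. That case is not vacuous and must be excluded by a different argument --- for instance, $S\cup N(W)$ then has boundary $(\text{torus})\sqcup S^2$, the sphere bounds a ball on its complementary side by irreducibility, and $M$ is a solid torus, whose genus-three splittings are all stabilized. ``Redo the genus bookkeeping'' does not supply this.

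It is also worth noting that Lemma \ref{lemma-2-8} performs the entire exclusion uniformly and much more cheaply, which is essentially the cited proof. In every case to be excluded, the component of $F_{VV'}$ containing $\partial W$ is a torus carrying at least one scar from $\V$ (it cannot be scar-free, since a closed torus cannot be a subsurface of the connected genus-three $F$). If $\partial W$ is essential in that torus, compressing along $W$ as well turns it into a sphere component of $F_{VV'W}$ carrying scars of $V$ or $V'$ and of $W$; if $\partial W$ is inessential there, the disk it bounds in the torus must contain a scar (else $\partial W$ is inessential in $F$), and that disk capped with a scar of $W$ is again a sphere with scars from both sides. Either way Lemma \ref{lemma-2-8} is contradicted. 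This argument makes no distinction between solid-torus and $(\text{torus})\times I$ pieces and avoids the appeal to the Bonahon--Otal classification of splittings of lens spaces (and of solid tori), which is a far heavier input than anything the paper quotes. Your route is valid where it applies, but it is both heavier and, as written, incomplete.
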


\begin{definition}[Definition 4.1 of \cite{Bachman2008}]\label{definition-2-10}
A \textit{generalized Heegaard splitting} (GHS) $\mathbf{H}$ of a $3$-manifold $M$ is a pair of sets of pairwise disjoint, transversally oriented, connected surfaces, $\operatorname{Thick}(\mathbf{H})$ and $\operatorname{Thin}(\mathbf{H})$ (called the \textit{thick levels} and \textit{thin levels}, respectively), which satisfies the following conditions.
\begin{enumerate}
\item Each component $M'$ of $M-\operatorname{Thin}(\mathbf{H})$ meets a unique element $H_+$ of $\operatorname{Thick}(\mathbf{H})$ and $H_+$ is a Heegaard surface in $M'$.
Henceforth we will denote the closure of the component of $M-\operatorname{Thin}(\mathbf{H})$ that contains an element $H_+\in\operatorname{Thick}(\mathbf{H})$ as $M(H_+)$.
\item As each Heegaard surface $H_+\subset M(H_+)$ is transversally oriented, we can consistently talk about the points of $M(H_+)$ that are ``above'' $H_+$ or ``below''  $H_+$.
Suppose $H_-\in\operatorname{Thin}(\mathbf{H})$.
Let $M(H_+)$ and $M(H_+')$ be the submanifolds on each side of $H_-$.
Then $H_-$ is below $H_+$ if and only if it is above $H_+'$.
\item There is a partial ordering on the elements of $\operatorname{Thin}(\mathbf{H})$ which satisfies the following: Suppose $H_+$ is an element of $\operatorname{Thick}(\mathbf{H})$, $H_-$ is a component of $\partial M(H_+)$ above $H_+$ and $H_-'$ is a component of $\partial M(H_+)$ below $H_+$.
Then $H_->H_-'$.
\end{enumerate}
We denote $\operatorname{Thin}(\mathbf{H})-\{\partial M\}$ as $\overline{\operatorname{Thin}}(\mathbf{H})$ and call it  the \textit{inner thin level}.
\end{definition}

\begin{definition}[Bachman, Definition 5.1 of \cite{Bachman2008}]
Let $M$ be a compact, orientable $3$-manifold.
Let $\mathbf{G}=\{T(\mathbf{G}),t(\mathbf{G})\}$ be a pair of sets of transversally oriented, connected surfaces in $M$ such that the elements of $T(\mathbf{G})\cup t(\mathbf{G})$ are pairwise disjoint.
Then we say $\mathbf{G}$ is a \textit{pseudo-GHS} if the following hold.
\begin{enumerate}
\item Each component $M'$ of $M-t(\mathbf{G})$ meets exactly one element $G_+$ of $T(\mathbf{G})$.
We denote the closure of $M'$ as $M(G_+)$.
\item Each element $G_+\in T(\mathbf{G})$ separates $M(G_+)$ into generalized (possibly trivial) compression bodies $\W$ and $\W'$, where $\partial_+\W=\partial_+\W'=G_+$.
\item There is a partial ordering of the elements of $t(\mathbf{G})$ that satisfies similar properties to the partial ordering of the thin levels of a GHS given in Definition \ref{definition-2-10}.
\end{enumerate}
\end{definition}

\begin{definition}[Bachman, a restricted version of Definition 5.2, Definition 5.3, and Definition 5.6 of \cite{Bachman2008}]
Let $M$ be a compact, orientable 3-manifold.
Let $\mathbf{H}$ be a Heegaard splitting of $M$, i.e. $\operatorname{Thick}(\mathbf{H})=\{F\}$ and $\operatorname{Thin}(\mathbf{H})$ consists of $\partial M$.
Let $V$ and $W$ be disjoint compressing disks of $F$ from the opposite sides of $F$ such that ${F}_{VW}$ has no $2$-sphere component. 
(Lemma 2.8 guarantees that ${F}_{VW}$ will not have a $2$-sphere component in the proof of Theorem \ref{theorem-main}.)
Define
$$T(\mathbf{G'})=(\operatorname{Thick}(\mathbf{H})-\{F\})\cup\{{F}_{V}, {F}_{W}\}, \text{ and}$$
$$t(\mathbf{G'})=\operatorname{Thin}(\mathbf{H})\cup\{{F}_{VW}\},$$
where we assume that each element of $T(\mathbf{G'})$ belongs to the interior of $\V$ or $\W$ by slightly pushing off $F_V$ or $F_W$ into the interior of $\V$ or $\W$ respectively and then also assume that they miss $F_{VW}$.
We say the pseudo-GHS $\mathbf{\mathbf{G'}}=\{T(\mathbf{G'}),t(\mathbf{G'})\}$ is obtained from $\mathbf{H}$ by \textit{pre-weak reduction} along $(V,W)$.
The relative position of the elements of $T(\mathbf{G'})$ and $t(\mathbf{G'})$ follows the order described in Figure \ref{figure-1}.
\begin{figure}
\includegraphics[width=10cm]{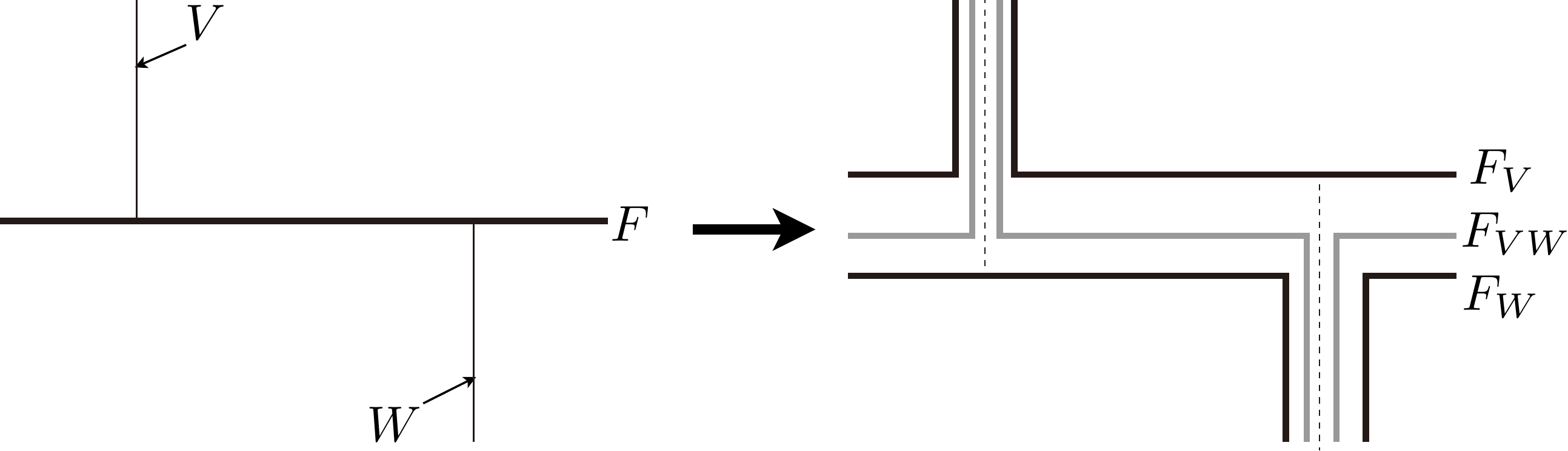}
\caption{pre-weak reduction \label{figure-1}}
\end{figure}
If there are elements $S\in T(\mathbf{G'})$ and $s\in t(\mathbf{G'})$ that cobound a product region $P$ of $M$ such that $P\cap T(\mathbf{G'})=S$ and $P\cap t(\mathbf{G'})=s$ then remove $S$ from $T(\mathbf{G'})$ and $s$ from $t(\mathbf{G'})$.
This gives a GHS $\mathbf{G}$ of $M$ from the pseudo-GHS $\mathbf{G'}$ (see Lemma 5.4 of \cite{Bachman2008}) and we say $\mathbf{G}$ is obtained from $\mathbf{G'}$ by \textit{cleaning}.
We say the GHS $\mathbf{G}$ of $M$ given by pre-weak reduction along $(V,W)$, followed by cleaning, is obtained from $\mathbf{H}$ by \textit{weak reduction} along $(V,W)$.
\end{definition}

\begin{definition}[J. Kim, \cite{JungsooKim2012}]
In a weak reducing pair for a Heegaard splitting $(\V,\W;F)$, if a disk belongs to $\V$, then we call it a \emph{$\V$-disk}.
Otherwise, we call it a \emph{$\W$-disk}.	
We call a $2$-simplex in $\DVW(F)$ represented by two vertices in $\DV(F)$ and one vertex in $\DW(F)$ a \textit{$\V$-face}, and also define a \textit{$\W$-face} symmetrically.
Let us consider a $1$-dimensional graph as follows.
\begin{enumerate}
\item We assign a vertex to each $\V$-face in $\DVW(F)$.
\item If a $\V$-face shares a weak reducing pair with another $\V$-face, then we assign an edge between these two vertices in the graph.
\end{enumerate}
We call this graph the \emph{graph of $\V$-faces}.
If there is a maximal subset $\varepsilon_\V$ of $\V$-faces in $\DVW(F)$ representing a connected component of the graph of $\V$-faces and the component is not an isolated vertex, then we call $\varepsilon_\V$ a \emph{$\V$-facial cluster}.
Similarly, we define the \emph{graph of $\W$-faces} and  a \textit{$\W$-facial cluster}.
In a $\V$-facial cluster, every weak reducing pair gives the common $\W$-disk, and vise versa.
\end{definition}

If we consider an unstabilized, genus three Heegaard splitting of an irreducible manifold, then we get the following lemmas.

\begin{lemma}[J. Kim, \cite{JungsooKim2012}]\label{lemma-2-14}
Suppose that $M$ is an irreducible $3$-manifold and $(\V,\W;F)$ is a genus three, unstabilized Heegaard splitting of $M$.
If there are two $\V$-faces $f_1$ represented by $\{V_0,V_1,W\}$ and $f_2$ represented by $\{V_1, V_2, W\}$ sharing a weak reducing pair $(V_1,W)$, then $\partial V_1$ is non-separating, and $\partial V_0$, $\partial V_2$ are separating in $F$.
Therefore, there is a unique weak reducing pair in a $\V$-facial cluster which can belong to two or more faces in the $\V$-facial cluster.
\end{lemma}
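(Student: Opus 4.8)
My plan is to extract everything from Lemma~\ref{lemma-2-9}. First I would record that, since $f_1$ and $f_2$ are honest $2$-simplices with $f_1\neq f_2$, the disks $V_0,V_1,V_2$ are pairwise non-isotopic in $\V$ and the triples $\{V_0,V_1,W\}$ and $\{V_1,V_2,W\}$ each consist of three mutually disjoint compressing disks. Applying Lemma~\ref{lemma-2-9} to each triple, I get in each case that one of the two $\V$-disks has boundary bounding a punctured torus in $F$ inside which the boundary of the other $\V$-disk is a non-separating loop.

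The easy case is when $\partial V_1$ is non-separating in $F$: then $\partial V_1$ bounds no punctured torus, so in both applications of Lemma~\ref{lemma-2-9} it must play the role of the non-separating loop, and hence $\partial V_0$ and $\partial V_2$ each bound a punctured torus in $F$ and are therefore separating. That is exactly the desired conclusion, so the whole point is to rule out the possibility that $\partial V_1$ is separating.

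To do this, suppose $\partial V_1$ is separating in $F$. A separating loop can never be the non-separating loop in the conclusion of Lemma~\ref{lemma-2-9}, so in both applications $\partial V_1$ is the loop bounding a punctured torus; since $\partial V_1$ cuts the genus-three surface $F$ into a genus-one and a genus-two piece, this punctured torus $T$ is uniquely determined, and $\partial V_0,\partial V_2$ are both non-separating loops lying in $T$. As $\partial V_1$ is separating in $F$, the disk $V_1$ separates $\V$ into two compression bodies; let $\V_{T}$ be the one that meets $F$ in $T$, so $\partial_+\V_{T}=T\cup V_1$ is a torus. The disks $V_0$ and $V_2$ are disjoint from $V_1$ and have their boundaries in $T$, so both lie in $\V_{T}$ and compress $\partial_+\V_{T}$. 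But a compression body whose $\partial_+$ is a torus is either $T^2\times I$, whose $\partial_+$ is incompressible, or a solid torus, in which every compressing disk is isotopic to the meridian disk; the existence of $V_0$ rules out the first, so $\V_{T}$ is a solid torus and $V_0,V_2$ are isotopic in $\V_{T}$, hence in $\V$, contradicting $V_0\neq V_2$. I expect this last step — recognizing $\V_{T}$ as a solid torus — to be the main obstacle; everything up to it is bookkeeping with Lemma~\ref{lemma-2-9}.

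For the final sentence of the lemma I would argue as follows. Lemma~\ref{lemma-2-9} applied to a single $\V$-face $\{U_1,U_2,W\}$ shows that exactly one of $\partial U_1,\partial U_2$ is separating in $F$ and the other non-separating; call the weak reducing pair through the disk with non-separating boundary the \emph{central pair} of that face. The part of the lemma just proved says exactly that a weak reducing pair lying in two distinct $\V$-faces has its $\V$-disk non-separating, i.e.\ is the central pair of each face containing it; equivalently, a pair whose $\V$-disk is separating lies in only one $\V$-face, so every edge of the graph of $\V$-faces is a pair that is central in both of its faces. Now let $\varepsilon_\V$ be a $\V$-facial cluster, that is, a non-trivial connected component of this graph; I would pick an edge of it, giving a pair $(U,W)$ that is central in two faces of $\varepsilon_\V$, and then walk along paths in the connected component, using at each edge that the shared pair is central in both of its faces, to conclude that $(U,W)$ is the central pair of every face of $\varepsilon_\V$. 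Since only central pairs can be shared, $(U,W)$ is the unique weak reducing pair of $\varepsilon_\V$ that belongs to two or more faces of $\varepsilon_\V$.
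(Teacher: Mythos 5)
Your argument is correct and complete: the dichotomy from Lemma~\ref{lemma-2-9}, the observation that a separating $\partial V_1$ would force $V_1$ to cut off a solid torus in which $V_0$ and $V_2$ are both meridian disks (hence isotopic, contradicting $f_1\neq f_2$), and the propagation of the ``central pair'' along edges of the graph of $\V$-faces are exactly the ingredients of the original proof in \cite{JungsooKim2012}, which this paper only cites rather than reproves. Your solid-torus/meridian picture is moreover precisely the structure the paper itself relies on in Definition~\ref{definition-2-15}, so nothing further is needed.
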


\begin{definition}[J. Kim, \cite{JungsooKim2012}]\label{definition-2-15}
By Lemma \ref{lemma-2-14}, there is a unique weak reducing pair in a $\V$-facial cluster belonging to two or more faces in the cluster.
We call it the \textit{center} of a $\V$-facial cluster.
We call the other weak reducing pairs \textit{hands} of a $\V$-facial cluster.
See Figure \ref{figure-2}.
Note that if a $\V$-face is represented by two weak reducing pairs, then one is the center and the other is a hand.
Lemma \ref{lemma-2-14} means that the $\V$-disk in the center of a $\V$-facial cluster is non-separating, and those from hands are all separating.
Moreover, Lemma \ref{lemma-2-9} implies that (i) the $\V$-disk in a hand of a $\V$-facial cluster is a band-sum of two parallel copies of that of the center of the $\V$-facial cluster and (ii) the $\V$-disk of a hand of a $\V$-facial cluster determines that of the center of the $\V$-facial cluster by the uniqueness of the meridian disk of the solid torus which the $\V$-disk of the hand cuts off from $\V$.
\end{definition}
	
Note that every $\V$ - or $\W$- facial cluster is contractible (see Figure \ref{figure-2}).

\begin{figure}
\includegraphics[width=4.5cm]{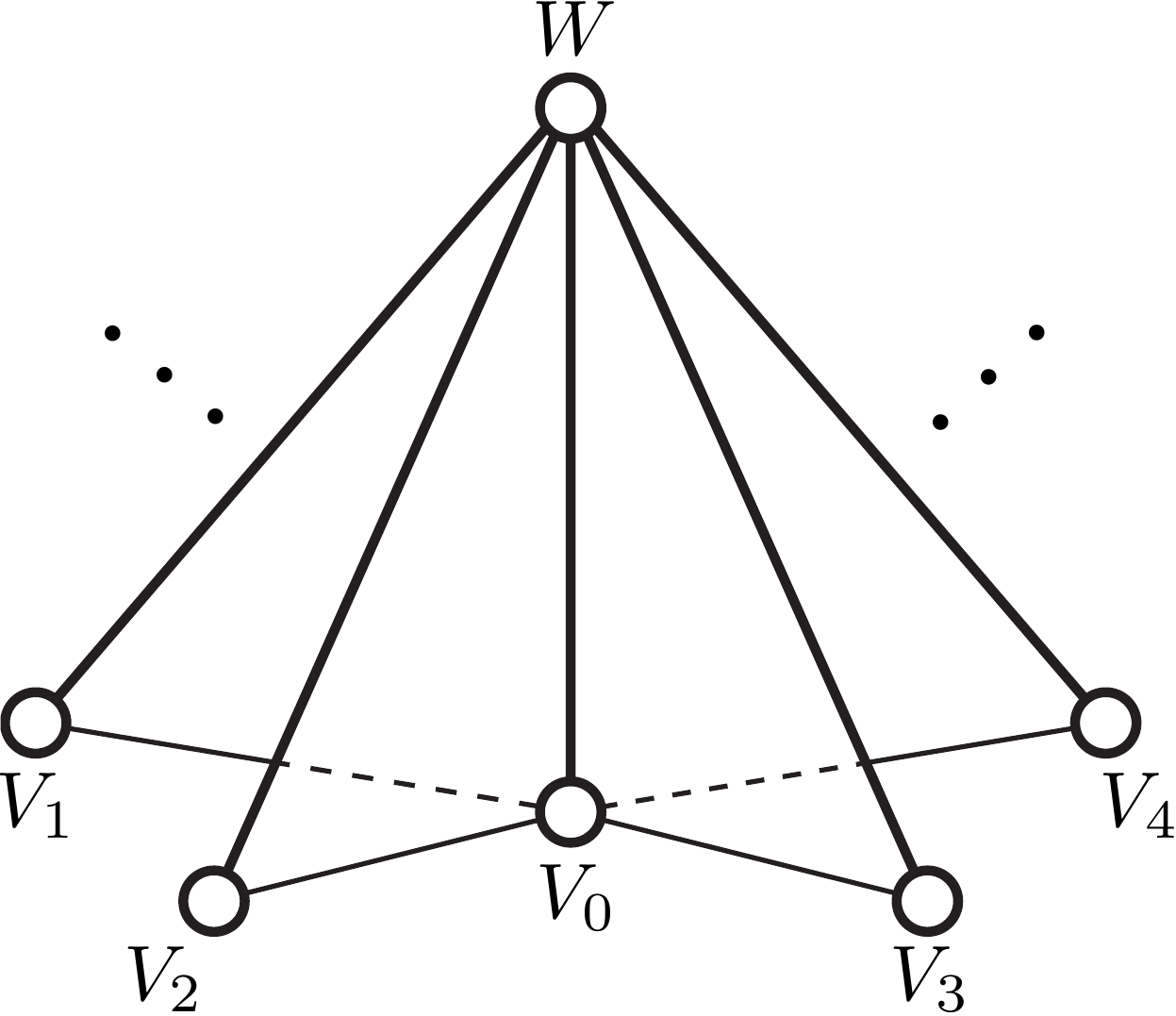}
\caption{An example of a $\V$-facial cluster in $\DVW(F)$. $(V_0, W)$ is the center and the other weak reducing pairs are hands. \label{figure-2}}
\end{figure}

\begin{lemma}[J. Kim, \cite{JungsooKim2012}]\label{lemma-2-16} 
Assume $M$ and $F$ as in Lemma \ref{lemma-2-14}.
Every $\V$-face belongs to some $\V$-facial cluster. 
Moreover, every $\V$-facial cluster has infinitely many hands.
\end{lemma}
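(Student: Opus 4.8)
The plan is to derive both assertions from the following key fact: if $\{V_0,V_1,W\}$ is a $\V$-face, labelled (via Lemma~\ref{lemma-2-9}) so that $\partial V_0$ is non-separating in $F$ and $\partial V_1$ bounds a once-punctured torus $T\subset F$ in which $\partial V_0$ is non-separating, then there exist infinitely many pairwise non-isotopic separating compressing disks $X\subset\V$ such that $\partial X$ is disjoint from $\partial V_0\cup\partial W$ and cuts off a once-punctured torus of $F$ containing $\partial V_0$. Granting this, each $\{V_0,X,W\}$ is a $\V$-face: it is a $2$-simplex of $\DVW(F)$ with two vertices in $\DV(F)$ (here $X$ is not isotopic to $V_0$ because exactly one of them is separating, and $X$ is not isotopic to $W$ because $(X,W)$, being a weak reducing pair, cannot be a reducing pair, as $M$ is irreducible and the splitting unstabilized) and one vertex in $\DW(F)$; moreover $\{V_0,X,W\}$ shares the weak reducing pair $(V_0,W)$ with $\{V_0,V_1,W\}$. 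Hence $\{V_0,V_1,W\}$ is not an isolated vertex of the graph of $\V$-faces, so it lies in a $\V$-facial cluster; since every $\V$-face can be put in the above form by Lemma~\ref{lemma-2-9}, this proves the first assertion.

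For the second assertion, let $\varepsilon_\V$ be any $\V$-facial cluster and $(V_0,W)$ its center; by Lemma~\ref{lemma-2-14} and Definition~\ref{definition-2-15} the $\V$-disk $V_0$ is non-separating, and $\varepsilon_\V$ contains a face $\{V_0,V_1,W\}$ with $\partial V_1$ separating. Two $\V$-faces can be adjacent in the graph of $\V$-faces only along a weak reducing pair whose $\V$-disk is non-separating (Lemma~\ref{lemma-2-14}), and each $\V$-face has exactly one such pair (Lemma~\ref{lemma-2-9}); therefore every face of $\varepsilon_\V$ has $(V_0,W)$ among its weak reducing pairs. Consequently the infinitely many $\V$-faces $\{V_0,X,W\}$ supplied by the key fact all lie in $\varepsilon_\V$, and so $\varepsilon_\V$ has infinitely many hands $(X,W)$.

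To establish the key fact I would twist $\partial V_1$ by powers of a Dehn twist along a curve $e\subset F$ that is disjoint from $\partial V_0\cup\partial W$ and satisfies $i(e,\partial V_1)>0$. Such an $e$ exists: $\partial V_1$ is essential in $F$ and is isotopic in $F$ to neither $\partial V_0$ (they have different separation type) nor $\partial W$ (there are no reducing pairs), so $\partial V_1$ is essential and non-peripheral in the surface $\Sigma$ obtained from $F$ by cutting along $\partial V_0\cup\partial W$; since $\chi(\Sigma)=-4$, the surface $\Sigma$ carries a simple closed curve $e$ with $i(e,\partial V_1)>0$, and any such $e$ is essential in $F$. (Concretely, one may take $e$ to be a band sum of an essential arc of $T\setminus N(\partial V_0)$ and an essential arc of $(F\setminus\operatorname{int}(T))\setminus N(\partial W)$, both with endpoints on $\partial V_1$, after checking that $\partial W\subset F\setminus\operatorname{int}(T)$, which again uses the absence of reducing pairs.) Let $\tau_e$ be the Dehn twist along $e$, supported in a neighborhood of $e$ disjoint from $\partial V_0\cup\partial W$, and set $\delta_n=\tau_e^n(\partial V_1)$ for $n\in\mathbb{Z}$. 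Then $\delta_n$ is separating, disjoint from $\partial V_0\cup\partial W$, and cuts off the once-punctured torus $\tau_e^n(T)$, which contains $\partial V_0$ non-separatingly; so, exactly as in Definition~\ref{definition-2-15}, $\delta_n$ bounds a compressing disk $X_n\subset\V$ obtained by band-summing two parallel copies of $V_0$ along an essential arc of $\tau_e^n(T)$. Since a compressing disk in an irreducible $3$-manifold is determined up to isotopy by its boundary, it only remains to check that infinitely many of the $\delta_n$ are pairwise non-isotopic, which follows from the standard fact that $i(\tau_e^n(\partial V_1),\mu)\to\infty$ as $|n|\to\infty$ for any simple closed curve $\mu$ with $i(e,\mu)>0$ (and such a $\mu$ exists because $e$ is essential in the closed surface $F$). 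This produces infinitely many distinct disks $X_n$ and proves the key fact.

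The step I expect to be the main obstacle is the construction of the twisting curve $e$, that is, verifying that $\partial V_1$ stays essential and non-peripheral after $F$ is cut along $\partial V_0\cup\partial W$; this is the one place where the hypotheses (irreducibility, unstabilizedness, and $V_0$ non-isotopic to $V_1$) are genuinely used. Everything else reduces to the band-sum description of $X_n$ from Definition~\ref{definition-2-15}, a routine intersection-number estimate for iterated Dehn twists, and the combinatorial bookkeeping that the faces $\{V_0,X_n,W\}$ lie in a single $\V$-facial cluster with center $(V_0,W)$.
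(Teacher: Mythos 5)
This lemma is imported from \cite{JungsooKim2012} and the present paper gives no proof of it, so there is nothing internal to compare against; judged on its own, your argument is correct and is essentially the expected one: the hands of a cluster are exactly the band-sums of two parallel copies of the center's non-separating disk $V_0$ along distinct bands, and you certify that infinitely many non-isotopic such band-sums disjoint from $W$ exist by Dehn twisting $\partial V_1$ along a curve $e$ disjoint from $\partial V_0\cup\partial W$, which is a clean way to get the required infinitude. One small imprecision: the fact that $\partial W$ lies in the genus-two component of $F-\partial V_1$ is not a consequence of the absence of reducing pairs alone --- that only rules out $\partial W$ being parallel to $\partial V_1$ in $T$; to exclude $\partial W$ non-separating in $T$ you need Lemma \ref{lemma-2-8} (a non-separating $\partial W\subset T$ would produce, after compressing along $V_1$ and $W$, a sphere with scars from both sides), exactly as the paper uses it elsewhere.
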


The following lemma means that the isotopy class of generalized Heegaard splitting obtained by weak reduction along a weak reducing pair does not depend on the choice of the weak reducing pair if the weak reducing pair varies in a fixed $\V$- or $\W$-facial cluster.

\begin{lemma}\label{lemma-2-17}
Assume $M$ and $F$ as in Lemma \ref{lemma-2-14}.
Every weak reducing pair in a $\V$-face gives the same generalized Heegaard splitting after weak reduction up to isotopy.
Therefore, every weak reducing pair in a $\V$-facial cluster gives the same generalized Heegaard splitting after weak reduction up to isotopy.
Moreover, the embedding of the thick level contained in $\V$ or $\W$ does not vary in the relevant compression body up to isotopy. 
\end{lemma}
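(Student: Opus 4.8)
I will call the three assertions of the lemma the \emph{face claim} (the two weak reducing pairs of a single $\V$-face give the same GHS after weak reduction, up to isotopy), the \emph{cluster claim} (deduced from the face claim), and the \emph{embedding claim}. The plan is to prove the face and embedding claims by a direct comparison of the two pre-weak reductions, and then to obtain the cluster claim from connectivity of the graph of $\V$-faces. So let $f$ be a $\V$-face, represented by $\{V_0,V_1,W\}$, with weak reducing pairs $(V_0,W)$ and $(V_1,W)$. Since $V_0,V_1$ are disjoint, non-isotopic $\V$-disks, Lemma \ref{lemma-2-9} applies: after relabelling, $\partial V_1$ is separating and bounds a once-punctured torus $T\subset F$, while $\partial V_0$ is a non-separating loop in $T$, and moreover $V_1$ cuts a solid torus $\mathcal T$ off $\V$ whose (unique) meridian disk is isotopic to $V_0$, as in Definition \ref{definition-2-15}. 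Write $G=\overline{F-T}$, a genus-two surface with the single boundary curve $\partial V_1$. First I would record that $\partial W\subset G$: otherwise $\partial W$ would be a curve of $T$ that is essential in $F$ and disjoint from $\partial V_0$, hence isotopic in $F$ to $\partial V_0$ or to $\partial V_1$, making $(V_0,W)$ or $(V_1,W)$ a reducing pair, which is impossible for an unstabilized splitting of an irreducible manifold. Also, by Lemma \ref{lemma-2-8}, neither $F_{V_0W}$ nor $F_{V_1W}$ has an $S^2$ component, so pre-weak reduction along either pair is defined.

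The core step is to show that the pseudo-GHS $\mathbf{G'}$ produced from $\mathbf{H}$ by pre-weak reduction along $(V_1,W)$ is, up to isotopy, the one produced by pre-weak reduction along $(V_0,W)$ with one extra thick level and one extra thin level added --- both tori parallel to $\partial\mathcal T$ --- which cobound a product region. Compressing $F$ along $V_1$ gives $F_{V_1}=S_1\sqcup S_2$, where $S_1=T\cup(\text{scar of }V_1)$ is a torus isotopic to $\partial\mathcal T$ and $S_2=G\cup(\text{scar of }V_1)$ has genus two; compressing $F$ along $V_0$ gives the connected genus-two surface $F_{V_0}=G\cup T_{V_0}$, where $T_{V_0}$ is the disk obtained by compressing the once-punctured torus $T$ along its non-separating curve $\partial V_0$. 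As $T_{V_0}$ and the $V_1$-scar are compressing disks of $F$ with the same boundary $\partial V_1$, and a compressing disk in an irreducible $3$-manifold is determined up to isotopy by its boundary, $T_{V_0}$ is isotopic to $V_1$ in $\V$; hence $F_{V_0}$ is isotopic to $S_2$ within $\V$. The analogous bookkeeping for the thin levels, using $\partial W\subset G$, gives $F_{V_1W}=(T\cup\text{scar})\sqcup(G_W\cup\text{scar})$ and $F_{V_0W}=G_W\cup T_{V_0}$, so the non-torus component of $F_{V_1W}$ is isotopic to $F_{V_0W}$ in $M$. Finally, $S_1$ and the torus component of $F_{V_1W}$ are two push-offs of $T\cup(\text{scar of }V_1)$ with no element of $T(\mathbf{G'})\cup t(\mathbf{G'})$ between them, so they cobound a product region $P$ of $M$ with $P\cap T(\mathbf{G'})=S_1$ and $P\cap t(\mathbf{G'})$ equal to that torus.

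Granting this, cleaning finishes the face and embedding claims. Cleaning the pseudo-GHS from $(V_1,W)$, we may delete the product pair $(S_1,\text{torus})$ first, after which what remains is isotopic to the pseudo-GHS from $(V_0,W)$; since the result of cleaning is independent of the choices made (Lemma 5.4 of \cite{Bachman2008}), the two weak reductions give the same GHS up to isotopy, which is the face claim. The same identification carries the thick level lying in $\V$ for $(V_1,W)$ --- namely $S_2$, once $S_1$ has been cleaned away --- onto the thick level $F_{V_0}$ for $(V_0,W)$ by an isotopy supported in $\V$, while the thick level lying in $\W$ is $F_W$ in either case since it depends only on $W$; this is the embedding claim. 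For the cluster claim, every $\V$-face of a $\V$-facial cluster contains the center of the cluster, because such a face shares a weak reducing pair with another face of the cluster and, by Lemma \ref{lemma-2-14}, the only weak reducing pair lying in two or more faces of the cluster is the center; hence every weak reducing pair of the cluster is the center $(V_0,W)$ or a hand $(V_i,W)$ lying in the face $\{V_0,V_i,W\}$ together with the center, and applying the face claim to each such face shows that all of them yield the same GHS up to isotopy.

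The step I expect to be the main obstacle is the isotopy-level (rather than merely homeomorphism-level) content of the core step: verifying that $S_1$ and the torus component of $F_{V_1W}$ genuinely cobound a product region of $M$ meeting $T(\mathbf{G'})\cup t(\mathbf{G'})$ in nothing but those two surfaces, and that $F_{V_0}$ is isotopic to the genus-two component of $F_{V_1}$ inside $\V$ and not just abstractly homeomorphic to it. Both use the explicit band-sum description of Definition \ref{definition-2-15} --- so that $T_{V_0}$ may be taken inside a small neighbourhood of $T\cup V_1$ --- and the uniqueness, up to isotopy, of a properly embedded disk with a prescribed boundary in an irreducible $3$-manifold.
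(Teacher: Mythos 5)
Your proposal is correct and follows essentially the same route as the paper: both identify the thick and thin levels of the two pre-weak reductions explicitly, use the solid-torus/meridian-disk structure from Lemma \ref{lemma-2-9} to isotope $F_{V_0}$ onto the genus-two component of $F_{V_1}$ inside $\V$, and observe that the leftover torus thick/thin pair cobounds a product region removed by cleaning. The only cosmetic difference is that you locate $\partial W$ in the genus-two side by a curves-on-surfaces argument (excluding reducing pairs) where the paper invokes Lemma \ref{lemma-2-8}.
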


\begin{proof}
Let $(V,W)$ be the center of a $\V$-facial cluster and $(V',W)$ be a hand of the $\V$-facial cluster.
Here, $V$ is non-separating and $V'$ is separating in $\V$ by Lemma \ref{lemma-2-14}.
Let $\mathbf{H}$ and $\mathbf{H}'$ be the generalized Heegaard splittings obtained by weak reductions along $(V,W)$ and $(V',W)$ from $(\V,\W;F)$ respectively.
It is sufficient to show that $\mathbf{H}$ and $\mathbf{H'}$ are the same up to isotopy.\\

\Claim{Both $\overline{\operatorname{Thin}}(\mathbf{H})$ and $\overline{\operatorname{Thin}}(\mathbf{H'})$ consist of one component.}

\begin{proofc}
Suppose that $\overline{\operatorname{Thin}}(\mathbf{H})$ or $\overline{\operatorname{Thin}}(\mathbf{H'})$ does not consist of one component.

We claim that each component of the inner thin level must have scars of both disks of the weak reducing pair.
Let us consider an arbitrary weak reducing pair $(V,W)$.
Then $\partial W$ must belong to the genus two component of $F-\partial V$ by Lemma \ref{lemma-2-8} and vise versa.
Hence, we get the follows in the pseudo-GHS obtained by the pre-weak reduction along $(V,W)$.
\begin{enumerate}
\item $F_{VW}$ has a component not having scars of both $V$ and $W$ if and only if at least one of $V$ and $W$ is separating.
\item If one of $V$ and $W$ is separating, say $V$, then we can find a product region in the pseudo-GHS cobounded by the isotoped genus one component of $F_V$ into the interior of $\V$ and the torus component of $F_{VW}$ having only the scar of $V$.
\end{enumerate}
Therefore, every component of $F_{VW}$ not having scars of both $V$ and $W$ disappears after cleaning (see Figure \ref{figure-3} or Figure \ref{figure-wrc}).
\begin{figure}
\includegraphics[width=12cm]{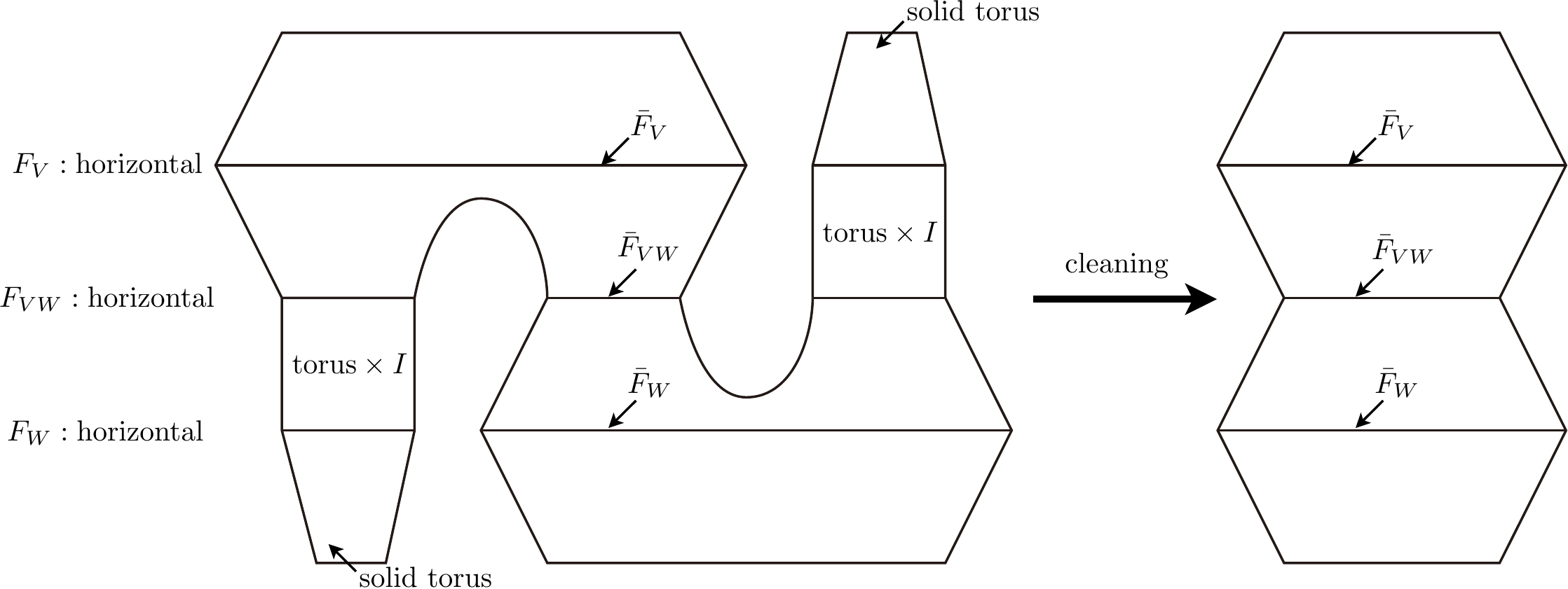}
\caption{The inner thin level comes from the component of $F_{VW}$ having scars of both $V$ and $W$. \label{figure-3}}
\end{figure}

Hence, if we try all possible weak reductions by considering Lemma \ref{lemma-2-8} (see Appendix \ref{appendix}), then the only case for disconnected inner thin level is when both disks are non-separating but the union of boundaries of them is separating in $F$, i.e. the inner thin level consists of two tori (see Figure \ref{figure-4}).
This means that $\overline{\operatorname{Thin}}(\mathbf{H'})$ must consist of only one component since $V'$ is separating.
Hence, $\overline{\operatorname{Thin}}(\mathbf{H})$ is disconnected, i.e. $W$ is non-separating and $\partial V\cup\partial W$ is separating in $F$.
But Lemma \ref{lemma-2-9} forces $V'$ to be a band-sum of two parallel copies of $V$.
Here, $V'$ must intersect $W$ otherwise we can find an arc in $F$ realizing the band-sum but missing $\partial W$, i.e. $\partial V\cup\partial W$ is non-separating, violating the assumption that $\partial V\cup\partial W$ is separating in $F$.
But this violates the assumption that $(V',W)$ is a weak reducing pair.

This completes the proof of Claim.
\end{proofc}

If we consider a pseudo-GHS $\mathbf{G}'$ obtained by pre-weak reduction along a weak reducing pair $(D,E)$ and do cleaning, then the genus one component of $F_D$ in $T(\mathbf{G}')$ disappears after cleaning when $D$ is separating since the region between $F_{DE}$ and the genus one component of $F_D$ in $T(\mathbf{G}')$ is $(\text{torus})\times I$  as we have seen in the proof of Claim and the symmetric argument also holds for the genus one component of $F_E$ in $T(\mathbf{G}')$ when $E$ is separating.

Hence, if we consider $\operatorname{Thick}(\mathbf{H})$ and $\overline{\operatorname{Thin}}(\mathbf{H})$, then we get
$$\operatorname{Thick}(\mathbf{H})=\{F_V,\bar{F}_{W}\}, \quad \overline{\operatorname{Thin}}(\mathbf{H})=\{\bar{F}_{VW}\},$$
where $\bar{F}_{W}$ comes from the genus two component of $F_W$ and $\bar{F}_{VW}$ is the component of $F_{VW}$ having scars of both $V$ and $W$ (if there is no confusion, then we will use the terms $F_V$ or $\bar{F}_V$ as the one isotoped into the interior of $\V$ for the cases of thick levels).
Similarly, if we consider $\operatorname{Thick}(\mathbf{H'})$ and $\operatorname{Thin}(\mathbf{H'})$, then we get
$$\operatorname{Thick}(\mathbf{H'})=\{\bar{F}_{V'},\bar{F}_{W}\},\quad \overline{\operatorname{Thin}}(\mathbf{H'})=\{\bar{F}_{V'W}\},$$
where $\bar{F}_{V'}$ comes from the genus two component of $F_{V'}$ and $\bar{F}_{V'W}$ is the component of $F_{V'W}$ having scars of both $V'$ and $W$.
Here, $V'$ must cut off a solid torus $\V'$ from $\V$ and $V$ is a meridian disk of $\V'$ by Lemma \ref{lemma-2-9}, i.e. $F_V$ is isotopic to $\bar{F}_{V'}$ in $\V$, so is in $M$. 
Moreover, $\bar{F}_{VW}$ is isotopic to $\bar{F}_{V'W}$ similarly since $\partial W$ must belong to the genus two component of $F-\partial V'$ by Lemma \ref{lemma-2-8}.
Hence, $\operatorname{Thick}(\mathbf{H})=\operatorname{Thick}(\mathbf{H'})$ and $\operatorname{Thin}(\mathbf{H})=\operatorname{Thin}(\mathbf{H'})$ up to isotopy.
This completes the proof.
\end{proof}


The next lemma gives an upper bound for the dimension of $\DVW(F)$ and restricts the shape of a $3$-simplex in $\DVW(F)$.

\begin{lemma}[J. Kim, Proposition 2.10 of \cite{JungsooKim2013}]\label{lemma-2-18}
Assume $M$ and $F$ as in Lemma \ref{lemma-2-14}.
Then $\operatorname{dim}(\DVW(F))\leq 3$.
Moreover, if $\operatorname{dim}(\DVW(F))=3$, then every $3$-simplex in $\DVW(F)$ must have the form $\{V_1, V_2, W_1, W_2\}$, where $V_1, V_2\subset \V$ and $W_1,W_2\subset \W$.
Indeed, $V_1$ ($W_1$ resp.) is non-separating in $\V$ (in $\W$ resp.) and $V_2$ ($W_2$ resp.) is a band-sum of two parallel copies of $V_1$ in $\V$ ($W_1$ in $\W$ resp.).
\end{lemma}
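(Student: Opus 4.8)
The plan is to bound, separately, the number of $\V$-disks and the number of $\W$-disks that can occur among the vertices of a single simplex of $\DVW(F)$, and then to analyze the extremal case using Lemma~\ref{lemma-2-9}. So let $\sigma$ be a simplex of $\DVW(F)$ and represent its vertices by pairwise disjoint disks $V_1,\dots,V_p\subset\V$ and $W_1,\dots,W_q\subset\W$; here $p,q\geq 1$, since by definition a simplex of $\DVW(F)$ has at least one vertex on each side. The $V_i$ are pairwise non-isotopic in $\V$ and the $W_j$ are pairwise non-isotopic in $\W$, being distinct vertices.

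First I would show $p\leq 2$. If $p\geq 3$, then $V_1,V_2,V_3$ are mutually disjoint and non-isotopic in $\V$, and each is disjoint from $W_1$, so $(V_1,W_1)$, $(V_2,W_1)$, $(V_3,W_1)$ are three weak reducing pairs with $V_i,V_j$ mutually disjoint and non-isotopic in $\V$ for $i\neq j$, contradicting the last sentence of Lemma~\ref{lemma-2-9}. Since the hypotheses on $(\V,\W;F)$ are symmetric in $\V$ and $\W$, the same argument gives $q\leq 2$. Hence $\sigma$ has at most $p+q\leq 4$ vertices, so $\operatorname{dim}(\DVW(F))\leq 3$; and a $3$-simplex forces $p=q=2$, i.e.\ it has the form $\{V_1,V_2,W_1,W_2\}$ with $V_1,V_2\subset\V$ and $W_1,W_2\subset\W$.

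Next I would pin down the isotopy types in such a $3$-simplex $\{V_1,V_2,W_1,W_2\}$. Applying Lemma~\ref{lemma-2-9} to the mutually disjoint disks $V_1,V_2\subset\V$ and $W_1\subset\W$, and using that $V_1$ and $V_2$ are non-isotopic, one of $\partial V_1,\partial V_2$ bounds a punctured torus $T$ in $F$ and the other is a non-separating loop in $T$; after relabeling we may assume $\partial V_2=\partial T$ and $\partial V_1$ non-separating in $T$. A non-separating simple closed curve contained in a once-punctured torus subsurface of $F$ remains non-separating in $F$ (cutting $F$ along it leaves the punctured torus, hence all of $F$, connected), so $V_1$ is non-separating in $\V$, while $\partial V_2$ is separating in $F$. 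For the band-sum assertion I would argue: inside the once-punctured torus $T$, the curve $\partial T$ is isotopic to the curve obtained by band-summing two parallel copies of the non-separating curve $\partial V_1$ along an appropriate (dual-arc) band lying in $T$; therefore the band-sum of two parallel copies of $V_1$ along that band is a compressing disk whose boundary is isotopic in $F$ to $\partial V_2$, and since $\V$ is an irreducible $3$-manifold, two compressing disks of $F$ with isotopic boundaries in $\partial_+\V$ are isotopic in $\V$ (isotope the boundaries together and cap off the resulting $2$-sphere by a ball). Hence $V_2$ is, up to isotopy, a band-sum of two parallel copies of $V_1$ in $\V$. Running the symmetric argument on $W_1,W_2\subset\W$ together with $V_1\subset\V$ gives, after relabeling, that $W_1$ is non-separating in $\W$ and $W_2$ is a band-sum of two parallel copies of $W_1$ in $\W$, which is exactly the claimed structure.

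The step I expect to be the main obstacle is this last one: upgrading the purely boundary-curve information supplied by Lemma~\ref{lemma-2-9} to the precise ``band-sum of two parallel copies'' statement inside the compression body. This requires the elementary but slightly fiddly fact that the boundary of a once-punctured torus is a band-sum of two parallel copies of any non-separating curve it carries — one must choose the band inside that subsurface so that the band-sum disk actually lies in $\V$ — together with the observation that in an irreducible compression body disks with isotopic boundary curves are isotopic. A minor additional point is to keep straight that ``non-separating in $\V$'' for $V_1$ really does follow from ``$\partial V_1$ non-separating in $F$'', since a disk whose boundary fails to separate $\partial_+\V$ cannot separate $\V$.
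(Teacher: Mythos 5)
Your proof is correct and follows essentially the route the paper indicates: the paper quotes the dimension bound from Proposition 2.10 of \cite{JungsooKim2013} (whose proof is exactly your counting argument via the ``moreover'' clause of Lemma \ref{lemma-2-9}), and it derives the structural claim, as you do, by applying Lemma \ref{lemma-2-9} to a $\V$-face and a $\W$-face of the $3$-simplex, with the band-sum upgrade being the same observation recorded in Definition \ref{definition-2-15}.
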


Note that the third statement of Lemma \ref{lemma-2-18} is obtained by applying Lemma \ref{lemma-2-9} to the $\V$-face $\{V_1,V_2,W_1\}$ and the $\W$-face $\{V_2,W_1,W_2\}$.

The next lemma gives natural, but important observations for genus $g\geq 2$ compression bodies containing a minus boundary component of genus $g-1$.
The proof comes from a standard outermost disk argument for the intersection of two compressing disks in $\V$ when we consider the uniqueness of the wanted disk and we can find a rigorous proof in Lemma 3.3 of \cite{IdoJangKobayashi2014}.

\begin{lemma}\label{lemma-2-19}
Let $\V$ be a genus $g\geq 2$ compression body with $\partial_-\V$ containing a genus $g-1$ surface.
Then the follows hold.
\begin{enumerate}
\item If $\partial_-\V$ is connected (i.e. $\partial_-\V$ consists of a genus $g-1$ surface), then there is a unique non-separating disk in $\V$ up to isotopy.\label{lemma-2-19-1}
\item If $\partial_-\V$ is disconnected (i.e. $\partial_-\V$ consists of a genus $g-1$ surface and a torus), then there is a unique compressing disk in $\V$ up to isotopy which is separating in $\V$.\label{lemma-2-19-2}
\end{enumerate}
\end{lemma}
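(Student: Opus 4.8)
The plan is to deduce both parts from the explicit handle structure of $\V$ together with a standard innermost-disk/outermost-arc surgery, the actual content being concentrated in the case of two disjoint disks. First I would pin down the structure: the identity $\chi(\partial_+\V)=\chi(\partial_-\V)-2n$, where $n$ is the number of $1$-handles in the standard handle decomposition of $\V$ over $\partial_-\V\times I$, gives $n=1$ in both cases. So $\V$ is obtained from $\partial_-\V\times I$ by attaching a single $1$-handle $h$; in case~\eqref{lemma-2-19-1} both feet of $h$ lie on the connected surface $\partial_-\V\times\{1\}$, while in case~\eqref{lemma-2-19-2} the lone $1$-handle is forced to join the two components of $\partial_-\V\times I$. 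Writing $\Sigma$ for the genus $g-1$ surface, cutting $\V$ along the cocore $C$ of $h$ produces $\Sigma\times I$ in case~\eqref{lemma-2-19-1} (so $C$ is non-separating there) and the disjoint union $(\Sigma\times I)\sqcup(T^2\times I)$ in case~\eqref{lemma-2-19-2} (so $C$ is separating there). This settles existence, so only uniqueness remains. I would also record, by a second Euler characteristic count, that in case~\eqref{lemma-2-19-2} the surface $\partial_+\V$ admits no non-separating compressing disk, hence every compressing disk of $\partial_+\V$ there is separating.

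For uniqueness, let $D$ be a compressing disk of the asserted type, let $D'$ be another one not isotopic to $D$, and isotope $D'$ to minimize $|D\cap D'|$. Circles of $D\cap D'$ are removed by an innermost-circle argument using the irreducibility of $\V$. If an arc of $D\cap D'$ survives, take one, $\alpha$, that is outermost on $D'$, cutting off a subdisk $E'\subset D'$ with $\operatorname{int}E'\cap D=\emptyset$; cutting $D$ along $\alpha$ and gluing a copy of $E'$ to each piece yields disks $\hat D_1,\hat D_2$ with $\partial\hat D_i\subset\partial_+\V$ and $|\hat D_i\cap D|<|D\cap D'|$. In case~\eqref{lemma-2-19-1} one has $[\partial\hat D_1]+[\partial\hat D_2]=[\partial D]\neq0$ in $H_1(\partial_+\V)$, so some $\hat D_i$ is non-separating and hence a compressing disk of the asserted type; in case~\eqref{lemma-2-19-2} one first arranges, via the usual ``isotope across a ball'' move (valid by irreducibility), that some $\hat D_i$ has essential boundary, and then it is automatically separating by the remark above. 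Replacing $D'$ by this $\hat D_i$ contradicts minimality. The one genuinely delicate point, the bookkeeping needed to keep a disk of the correct type with strictly fewer intersections, is exactly Lemma~3.3 of \cite{IdoJangKobayashi2014}; this is the step I expect to be the main obstacle. Hence we may assume $D\cap D'=\emptyset$.

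Finally, with $D'$ disjoint from $D$, I would cut $\V$ along $D$ and use that the resulting pieces are products over surfaces of genus $\ge1$, whose $\partial_+$ boundary surfaces are incompressible in them. Thus $\partial D'$, a simple closed curve on such a boundary surface disjoint from the scars of $D$, is inessential there and bounds a disk $E$; the proof then comes down to counting how many of the one or two scars of $D$ lie in $E$. If $E$ contains no scar, then $\partial D'$ bounds a disk in $\partial_+\V$, contradicting that $D'$ is a compressing disk. In case~\eqref{lemma-2-19-1}, if $E$ contains both scars then $\partial D'$ bounds a one-holed torus in $\partial_+\V$ and is therefore separating, contradicting that $D'$ is non-separating; so $E$ contains exactly one scar, $\partial D'$ is isotopic to $\partial D$ in $\partial_+\V$, and $D'\simeq D$ (two compressing disks in an irreducible manifold with isotopic boundaries are isotopic). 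In case~\eqref{lemma-2-19-2} there is a single scar, which $E$ must contain (else $\partial D'$ is inessential in $\partial_+\V$), so again $\partial D'$ is isotopic to $\partial D$ and $D'\simeq D$. Everything in this last paragraph is either a direct Euler characteristic computation or a standard product-manifold argument.
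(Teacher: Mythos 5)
Your proposal is correct and takes essentially the same route as the paper, which in fact gives no written argument for this lemma beyond the remark that it follows from ``a standard outermost disk argument'' whose rigorous form is Lemma~3.3 of \cite{IdoJangKobayashi2014} --- exactly the citation you invoke for the delicate reduction to the disjoint case (your handle-structure existence argument, the Euler-characteristic exclusion of non-separating disks in case~(2), and the incompressibility argument for disjoint disks all being the implicit content of that sketch). The one wobble is the phrase ``replacing $D'$ by this $\hat D_i$ contradicts minimality'': since $\hat D_i$ is surgered from $D$ rather than isotoped from $D'$, this does not literally contradict the minimality of $|D\cap D'|$ over the two isotopy classes, but that is precisely the bookkeeping you explicitly defer to the cited lemma, so it is not a gap relative to the paper's own standard.
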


\section{The proof of Theorem \ref{theorem-main}}

First, we introduce the following lemma.

\begin{lemma}\label{lemma-3-1}
Suppose that $M$ is an orientable, irreducible $3$-manifold and $(\V,\W;F)$ is a genus three, unstabilized Heegaard splitting of $M$.
If there exist two weak reducing pairs such that the generalized Heegaard splittings obtained by weak reductions along these weak reducing pairs are not isotopic in $M$, then $F$ is critical.
\end{lemma}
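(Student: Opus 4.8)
The plan is to use Bachman's original definition of a critical surface, which (by the discussion following Definition of topological index in the excerpt, and by \cite{Bachman2010}) is equivalent to having topological index two. Recall that $F$ is critical if the vertices of $\D(F)$ can be partitioned into two nonempty subsets $C_0$ and $C_1$ such that:
(i) every weak reducing pair $(V,W)$ has its two disks lying in the same $C_i$ (more precisely, any edge of $\D(F)$ connecting a $\V$-disk to a $\W$-disk — i.e. any weak reducing pair — must have both endpoints in a common $C_i$); and
(ii) for each $i$, there is at least one weak reducing pair with both disks in $C_i$.
So the entire burden is to produce such a bipartition from the hypothesis that two weak reducing pairs $(V,W)$ and $(V',W')$ yield non-isotopic generalized Heegaard splittings after weak reduction.

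First I would set up the combinatorial framework already developed in the excerpt: every weak reducing pair lies in a $\V$-face or a $\W$-face (this follows because, with $\dim(\DVW(F))\le 3$ by Lemma~\ref{lemma-2-18} and the structure of $3$-simplices there, together with Lemma~\ref{lemma-2-9}, a weak reducing pair $(V,W)$ can always be extended by a third disk on one side or the other — one would need to verify that a weak reducing pair not extendable to any face cannot occur, or handle such ``isolated'' pairs separately). Then by Lemma~\ref{lemma-2-16} every $\V$-face lies in a $\V$-facial cluster, and symmetrically for $\W$-faces, and by Lemma~\ref{lemma-2-17} all weak reducing pairs in a single facial cluster give the same GHS (with the same embeddings of thick levels) up to isotopy. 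Define an equivalence-type relation on facial clusters: two clusters are ``linked'' if they share a weak reducing pair, and then group clusters into classes according to the isotopy class of the resulting GHS. By Lemma~\ref{lemma-2-17}, linked clusters give isotopic GHS's, so each linkage-component of clusters is contained in a single GHS-class; the hypothesis says there are at least two GHS-classes.

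The key step is then to define $C_0$ = (all $\V$-disks) $\cup$ (all $\W$-disks that appear in a weak reducing pair whose GHS lies in some fixed one of the two classes, say class $0$), and $C_1$ = everything else, and to check that this is a valid critical partition; but I would instead expect the cleaner choice to be obtained by coloring the $\W$-disks (equivalently the centers of $\W$-facial clusters, using Definition~\ref{definition-2-15} and the fact that in a $\V$-facial cluster all pairs share a common $\W$-disk) by which GHS-class they belong to, and likewise coloring $\V$-disks, then fusing these into two parts. Condition (ii) is immediate since each of the two GHS-classes is realized by at least one weak reducing pair. The main obstacle — and the part requiring real work — is condition (i): one must show no weak reducing pair straddles the partition, i.e. that if $(V,W)$ is a weak reducing pair then $V$ and $W$ receive the same color. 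This amounts to showing that the GHS-class is locally constant along the adjacency structure of weak reducing pairs, which is exactly where Lemma~\ref{lemma-2-17} (invariance within a facial cluster) and Lemma~\ref{lemma-2-18} (the rigid form of $3$-simplices, forcing band-sum relationships that let one slide between a non-separating disk and its band-sums without changing the GHS) must be combined; the delicate point is ruling out a weak reducing pair lying in two different facial clusters that belong to different GHS-classes, for which I would argue via Lemma~\ref{lemma-2-9} and Lemma~\ref{lemma-2-19} that the separating/non-separating type of the disks, together with the uniqueness statements for disks in the relevant compression bodies, pins down the GHS uniquely from the pair.

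Assuming these checks go through, the partition $\{C_0,C_1\}$ satisfies Bachman's criterion, so $F$ is critical and we are done. I expect the write-up to spend most of its length on a careful case analysis of how two weak reducing pairs can be related (sharing one disk, sharing none but both extending a common face, or connected through a chain of facial clusters), tracking in each case that the GHS is forced to agree — so the contrapositive pressure of the hypothesis forces the global bipartition. The facial-cluster machinery from \cite{JungsooKim2012} does most of the heavy lifting; the new content is organizing it into the two-class coloring and verifying the no-straddling condition.
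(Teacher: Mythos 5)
There is a genuine gap. Your plan is to exhibit Bachman's original critical partition $\{C_0,C_1\}$ directly, but the partition you propose is not well-defined and the decisive condition is left unproved. First, your explicit candidate $C_0=(\text{all }\V\text{-disks})\cup(\W\text{-disks in class-}0\text{ pairs})$ fails immediately: any weak reducing pair whose GHS lies in class $1$ has its $\V$-disk in $C_0$ and its $\W$-disk in $C_1$, so it straddles the partition. Second, the fallback of ``coloring each disk by the GHS-class of the weak reducing pairs it belongs to'' presupposes that a single disk cannot lie in weak reducing pairs of different classes; but a fixed $V$ can be disjoint from two $\W$-disks $W$, $W'$ that intersect each other (so $\{V,W,W'\}$ is \emph{not} a face and Lemma~\ref{lemma-2-17} does not apply), and nothing you cite rules out $(V,W)$ and $(V,W')$ yielding non-isotopic GHSs --- indeed the thick level $\bar F_W$ depends on $W$ alone, so e.g.\ $W$ non-separating and $W'$ cutting off $(\text{torus})\times I$ give visibly different GHSs. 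In that situation your coloring assigns both colors to $V$. Third, compressing disks belonging to no weak reducing pair at all must still be placed in $C_0$ or $C_1$, and ``isolated'' weak reducing pairs not extendable to any $\V$- or $\W$-face are flagged but never handled. In short, the ``no straddling'' condition is the entire content of the lemma, and the tools you name (Lemmas~\ref{lemma-2-9}, \ref{lemma-2-18}, \ref{lemma-2-19}) do not close it.

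The paper's proof sidesteps all of this by quoting Bachman's machinery rather than reconstructing it: it uses the \emph{distance} between weak reducing pairs (Definition~\ref{definition-distance-wrp}) and Bachman's Lemma~8.5 (Lemma~\ref{lemma-3-3} here), which says that two weak reducing pairs at distance $\infty$ force criticality --- that lemma is precisely where the partition $\{C_0,C_1\}$ is built, using finiteness of distance from a fixed pair as the coloring, which is well-defined where your GHS-coloring is not. The remaining work is then only the finite-distance case: a sequence realizing distance $k<\infty$ yields a chain of $\V$- and $\W$-faces in which consecutive faces share a weak reducing pair, and Lemma~\ref{lemma-2-17} propagated along this chain shows the two GHSs are isotopic, contradicting the hypothesis. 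Your instinct that Lemma~\ref{lemma-2-17} is the engine of the argument is correct, but it must be applied along a distance-realizing chain inside the contrapositive, not used to justify a global two-coloring of the disk set.
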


Before proving Lemma \ref{lemma-3-1}, we introduce the next definition.

\begin{definition}[Bachman, Definition 8.3 of \cite{Bachman2008}]\label{definition-distance-wrp}
Suppose $F$ is a Heegaard surface in a $3$-manifold.
Let $(V_i, W_i)$ be a weak reducing pair for $F$ for $i= 0, 1$.
Then we define the \textit{distance} between $(V_0, W_0)$ and  $(V_1, W_1)$ to be the smallest $n$ such that there is a sequence $\{D_j\}_{j=0}^{n+1}$ where
\begin{enumerate}
\item $\{D_0, D_1\}=\{V_0,W_0\}$,
\item $\{D_n, D_{n+1}\}=\{V_1, W_1\}$,
\item for all $j$ the pair $\{D_j,D_{j+1}\}$ gives a weak reducing pair for $F$,
\item for $1\leq j\leq n$, $D_{j-1}$ is disjoint from, or equal to, $D_{j+1}$.
\end{enumerate}
If there is no such sequence, then we define the distance to be $\infty$.
\end{definition}

\begin{lemma}[Bachman, Lemma 8.5 of \cite{Bachman2008}]\label{lemma-3-3}
Suppose $F$ is a Heegaard surface in a $3$-manifold. 
If there are weak reducing pairs $(V,W)$ and $(V',W')$ for $F$ such that the distance between $(V,W)$ and $(V',W')$ is $\infty$, then $F$ is critical.
\end{lemma}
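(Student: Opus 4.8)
\textbf{Proof plan for Lemma \ref{lemma-3-3}.}

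The plan is to derive this as a consequence of Bachman's original combinatorial definition of a critical surface from \cite{Bachman2008}, which characterizes criticality in terms of a partition of the compressing disks. Recall that definition: $F$ is critical if the set of (isotopy classes of) compressing disks for $F$ can be partitioned into two non-empty subsets $C_0$ and $C_1$ such that (a) for each $i$, if $(D,E)$ is a weak reducing pair with $D\subset\V$ and $E\subset\W$, then $\{D,E\}\not\subset C_i$ — i.e.\ no weak reducing pair has both of its disks in the same part; wait, I need the correct form: for each $i$ there is \emph{at least one} weak reducing pair with both disks in $C_i$, and no weak reducing pair has one disk in $C_0$ and the other in $C_1$. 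I would state this definition precisely (citing Definition 6.1 or the relevant definition of \cite{Bachman2008}) and cite Bachman's theorem from \cite{Bachman2010} that this combinatorial condition is equivalent to $\D(F)$ having homotopy index two, i.e.\ to $F$ being critical in the sense used in this paper.

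The main construction is then as follows. Given weak reducing pairs $(V,W)$ and $(V',W')$ at distance $\infty$, define a relation on the set of weak reducing pairs: say $(D_0,E_0)\sim(D_1,E_1)$ if the distance between them is finite. Using properties (3) and (4) of Definition \ref{definition-distance-wrp}, together with the triangle-type inequality for this distance (concatenating two admissible sequences, after checking the disjointness condition at the splice point, yields an admissible sequence — this is where I would be slightly careful, since the naive concatenation needs the last disk of the first sequence to be disjoint from or equal to the second disk of the second sequence; but one can always reroute through the shared pair so this works), I get an equivalence relation. The hypothesis says $(V,W)$ and $(V',W')$ lie in different classes, so there are at least two classes. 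Now I would define $C_0$ to be the set of all disks appearing in some weak reducing pair equivalent to $(V,W)$, together with all disks appearing in \emph{no} weak reducing pair at all, suitably distributed — actually cleaner: let $C_0$ be all disks $D$ such that either $D$ appears in a weak reducing pair equivalent to $(V,W)$, or $D$ appears in no weak reducing pair; and let $C_1$ be everything else. One checks $C_1\neq\emptyset$ because $V'$ (or $W'$) lies in it, and $C_0\neq\emptyset$ because $V\in C_0$.

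The heart of the verification is that no weak reducing pair is split across $C_0$ and $C_1$. Suppose $(D,E)$ is a weak reducing pair with $D\in C_0$ and $E\in C_1$. Since $E\in C_1$, $E$ appears in some weak reducing pair, namely $(D,E)$ itself, so $E$ is not in the ``appears in no pair'' part; hence $E$ appears in a weak reducing pair not equivalent to $(V,W)$. But $(D,E)$ \emph{is} a weak reducing pair containing $E$, so $(D,E)$ is not equivalent to $(V,W)$, which forces $D\notin C_0$ (since any pair through a disk equivalent-class-member of $(V,W)$ would be equivalent to $(V,W)$ via the trivial length-one reroute through that shared disk) — contradiction. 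Similarly one must check that each $C_i$ contains both disks of at least one weak reducing pair: $C_0$ contains $V$ and $W$ which form a weak reducing pair, and $C_1$ contains $V'$ and $W'$ which form one, and $(V',W')$ is not equivalent to $(V,W)$ so neither $V'$ nor $W'$ lies in $C_0$. I expect the main obstacle to be precisely the bookkeeping in this last step — ensuring the equivalence relation interacts correctly with the partition so that split pairs are genuinely impossible — and in handling disks that belong to no weak reducing pair, which must be assigned to one side without creating a split pair; placing all of them in $C_0$ works since if $D$ is in a pair then $D$ is automatically not among them. Once the partition is exhibited and shown to satisfy Bachman's criterion, criticality of $F$ follows immediately from the cited equivalence, completing the proof.
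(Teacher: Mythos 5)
The paper does not actually prove Lemma \ref{lemma-3-3}; it is imported verbatim from Bachman (Lemma 8.5 of \cite{Bachman2008}), so there is no internal proof to compare against. Judged on its own terms, your proposal has the right general shape (reduce to Bachman's combinatorial partition criterion for criticality and build the partition from the equivalence relation ``finite distance''), and your handling of transitivity of that relation is fine. But there is a genuine gap at the step you call the ``trivial length-one reroute through that shared disk.'' You claim that if a disk $D$ lies in some weak reducing pair equivalent to $(V,W)$, then \emph{every} weak reducing pair containing $D$ is equivalent to $(V,W)$. Concretely, for pairs $(D,E_0)$ and $(D,E)$ sharing $D$, the rerouting sequence is $D_0=E_0$, $D_1=D$, $D_2=E$ with $n=1$, and condition (4) of Definition \ref{definition-distance-wrp} then demands that $D_0=E_0$ be disjoint from or equal to $D_2=E$. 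Since $E_0$ and $E$ lie on the same side of $F$, this is not automatic, and longer detours require auxiliary disks disjoint from everything in sight, which need not exist. So two weak reducing pairs sharing a disk can perfectly well be at infinite distance; indeed condition (4) is the entire content of the distance --- without it, ``finite distance'' would collapse to connectivity in the graph whose edges are weak reducing pairs, whereas the definition is engineered to mean connectivity through $\V$- and $\W$-faces (this is exactly how the present paper uses it in the proofs of Lemmas \ref{lemma-3-1} and \ref{lemma-new}).

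This gap infects both halves of your verification: (i) the claim that no weak reducing pair is split between $C_0$ and $C_1$ reduces precisely to the false reroute (a disk $D$ may lie in a pair equivalent to $(V,W)$, hence $D\in C_0$, and simultaneously in a pair $(D,E)$ not equivalent to $(V,W)$ with $E\in C_1$); and (ii) the claim that $V',W'\in C_1$ uses the same inference ($V'$ could a priori belong to some other pair at finite distance from $(V,W)$ even though $(V',W')$ is not). To repair the argument you would either have to follow Bachman's actual proof in \cite{Bachman2008}, or argue homotopy-theoretically as in \cite{Bachman2010}: the loop running across $\DVW(F)$ along the edge $\{V,W\}$, through the contractible complex $\DW(F)$ to $W'$, back across along $\{V',W'\}$, and through $\DV(F)$ to $V$ is essential, because a null-homotopy would, after simplicial approximation, produce a chain of $2$-simplices joining the edge $\{V,W\}$ to the edge $\{V',W'\}$ and hence a sequence realizing a finite distance, contradicting the hypothesis.
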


\begin{proofN}{Lemma \ref{lemma-3-1}}
Suppose that there are two weak reducing pairs $(V, W)$ and $(V', W')$ for $(\V,\W;F)$ such that the generalized Heegaard splittings $\mathbf{H}$ and $\mathbf{H}'$ obtained by weak reductions along these weak reducing pairs are not isotopic in $M$.

If the distance between $(V,W)$ and $(V',W')$ is $\infty$, then $F$ is critical by Lemma \ref{lemma-3-3}.

Assume that the distance between $(V,W)$ and $(V',W')$ is $k<\infty$.
If $k\leq 1$, then either $(V,W)=(V',W')$ or they are contained in a $\V$- or $\W$-face.
This leads to a contradiction by Lemma \ref{lemma-2-17}.
Hence, assume that $k\geq 2$.
Let $\{D_0, D_1, \cdots, D_k, D_{k+1}\}$ be the sequence of compressing disks of $F$ realizing the distance between $(V,W)$ and $(V',W')$, where $\{D_0,D_1\}=\{V,W\}$ and $\{D_k,D_{k+1}\}=\{V',W'\}$.
By reading the above sequence from the left to the right, we get a sequence of $\V$- and $\W$-faces $\Delta_0$, $\cdots$, $\Delta_n$  such that
\begin{enumerate}
\item  $(V, W)\subset \Delta_0$ and $(V', W')\subset \Delta_n$,
\item $\Delta_{i-1}$ shares a weak reducing pair with $\Delta_{i}$ for $i=1, \cdots, n$.
\end{enumerate}
If we consider the generalized Heegaard splitting corresponding to $\Delta_i$ inductively from $i=0$ to $n$ by using Lemma \ref{lemma-2-17} and the assumption that $\Delta_{i-1}$ shares a weak reducing pair with $\Delta_i$, then we can see that $\mathbf{H}$ and $\mathbf{H}'$ are isotopic, violating the assumption.
Hence, the distance between $(V,W)$ and $(V',W')$ cannot be finite.
This completes the proof.
\end{proofN}

Let $(\V,\W;F)$ be a weakly reducible, unstabilized, genus three Heegaard splitting of an irreducible $3$-manifold $M$.
By considering Lemma \ref{lemma-3-1}, we assume that the generalized Heegaard splitting obtained by weak reduction from $(\V,\W;F)$ is unique up to isotopy.
If we consider the generalized Heegaard splitting obtained by weak reduction along $(V,W)$ from $(\V,\W;F)$, then the inner thin level would consist of a torus or two tori, where the latter case holds only when both disks of the weak reducing pair are non-separating but the union of the boundaries is separating in $F$ as we have checked in the proof of Lemma \ref{lemma-2-17} (see Figure \ref{figure-4}).
\begin{figure}
\includegraphics[width=4cm]{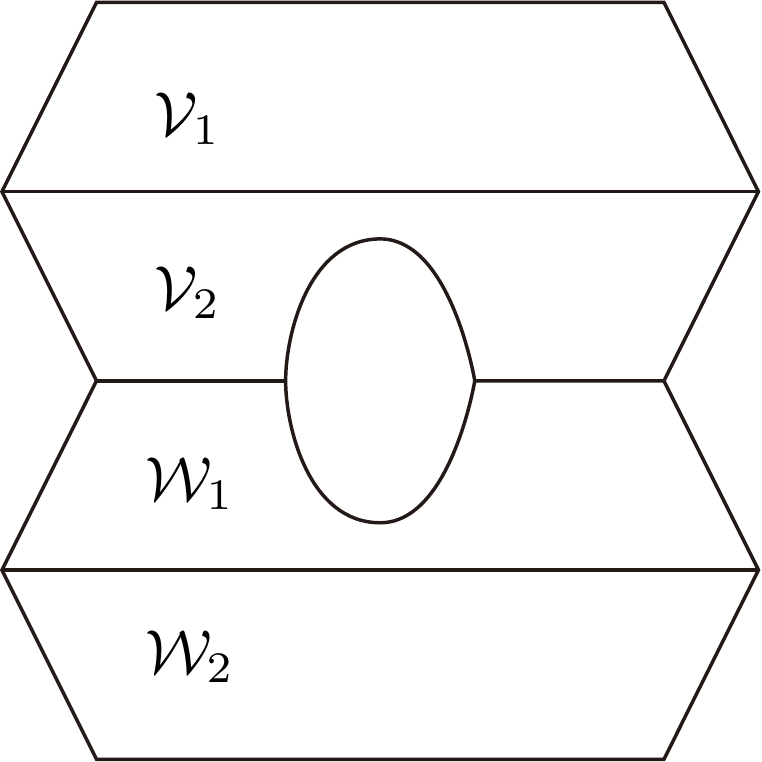}
\caption{The inner thin level consists of two tori. \label{figure-4}}
\end{figure}
The thick levels would come from the genus two components of $F_V$ and $F_W$, say $\bar{F}_V$ and $\bar{F}_W$ as we have checked in the proof of Lemma \ref{lemma-2-17}.
But even though we assumed that the generalized Heegaard splitting obtained by weak reduction from $(\V,\W;F)$ is unique up to isotopy, it is not clear that both $\bar{F}_V$ and $\bar{F}_W$ should be unique up to isotopy in $\V$ and $\W$ respectively, not in the entire $M$.
Indeed, we can imagine an ambient isotopy $f_t$ defined in $M$ such that $f_0$ is the identity map in $M$, $f_1(\bar{F}_V)\cap \W= \emptyset$, $\bar{F}_V$ is not isotopic to $f_1(\bar{F}_V)$ in $\V$, and $f_t(\bar{F}_V)\cap \W\neq \emptyset$ for some $t$.
If the isotoped generalized Heegaard splitting itself  is also that obtained by weak reduction from $(\V,\W;F)$, then there would be another weak reducing pair $(V',W')$ for $(\V,\W;F)$ such that $f_1(\bar{F}_V)$ is isotopic to the genus two component of $F_{V'}$ in $\V$ but $V$ is not isotopic to $V'$ in $\V$.
Hence, we need the following lemma.

\begin{lemma}\label{lemma-new}
Assume $M$ and $F$ as in Lemma \ref{lemma-3-1}.
Suppose that there are two generalized Heegaard splittings $\mathbf{H}_1$ and $\mathbf{H}_2$ obtained by weak reductions along $(V_1,W_1)$ and $(V_2,W_2)$ from $(\V,\W;F)$ respectively such that the thick levels of $\mathbf{H}_1$ and $\mathbf{H}_2$ embedded in the interior of $\V$ are non-isotopic in $\V$. (It may be possible that $\mathbf{H}_1$ is the same as $\mathbf{H}_2$ in $M$ up to isotopy.)
Then $F$ is critical.
\end{lemma}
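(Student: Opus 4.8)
The plan is to show that the distance between the weak reducing pairs $(V_1,W_1)$ and $(V_2,W_2)$, in the sense of Definition \ref{definition-distance-wrp}, is infinite; then $F$ is critical by Lemma \ref{lemma-3-3}. The argument mimics the proof of Lemma \ref{lemma-3-1}, the only change being that along a hypothetical finite distance-realizing sequence one tracks the isotopy class \emph{in $\V$} of the thick level lying in $\V$, rather than the isotopy class in $M$ of the whole generalized Heegaard splitting. Recall from the discussion preceding this lemma that weak reduction along any weak reducing pair of $F$ produces a generalized Heegaard splitting with exactly one thick level lying in $\V$, namely the genus two component of the surface obtained by compressing $F$ along the $\V$-disk of the pair, pushed off into the interior of $\V$; thus this is a well-defined isotopy invariant of the weak reducing pair, and by hypothesis it takes distinct values on $(V_1,W_1)$ and $(V_2,W_2)$.

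Assume for contradiction that $k:=d\bigl((V_1,W_1),(V_2,W_2)\bigr)$ is finite, and pick a distance-realizing sequence $\{D_0,\dots,D_{k+1}\}$ with $\{D_0,D_1\}=\{V_1,W_1\}$ and $\{D_k,D_{k+1}\}=\{V_2,W_2\}$. For $0\le j\le k$ put $P_j=\{D_j,D_{j+1}\}$, a weak reducing pair, so that $P_0=(V_1,W_1)$ and $P_k=(V_2,W_2)$. Fix $j$ with $1\le j\le k$. Since a weak reducing pair has one disk on each side of $F$ and $P_{j-1},P_j$ share the disk $D_j$, the remaining disks $D_{j-1}$ and $D_{j+1}$ lie on the \emph{same} side of $F$, and by clause (4) of Definition \ref{definition-distance-wrp} they are disjoint or isotopic. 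If $D_{j-1}$ and $D_{j+1}$ are isotopic then $P_{j-1}=P_j$ and there is nothing to prove. Otherwise $\{D_{j-1},D_j,D_{j+1}\}$ is a $2$-simplex of $\DVW(F)$ with two vertices on one side of $F$ and one on the other, i.e.\ a $\V$-face or a $\W$-face, and it contains both $P_{j-1}$ and $P_j$; hence the thick level lying in $\V$ is unchanged up to isotopy in $\V$ when we pass from weak reduction along $P_{j-1}$ to weak reduction along $P_j$ --- this is Lemma \ref{lemma-2-17} when the face is a $\V$-face, and is immediate when it is a $\W$-face, since then $P_{j-1}$ and $P_j$ have the same $\V$-disk.

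Concatenating these isotopies for $j=1,\dots,k$ shows that the thick level of $\mathbf{H}_1$ lying in $\V$ is isotopic in $\V$ to the thick level of $\mathbf{H}_2$ lying in $\V$, a contradiction; the degenerate cases $k=0$ (where $\mathbf{H}_1=\mathbf{H}_2$ outright) and $k=1$ (a single face, a single step) are subsumed. Therefore the distance is infinite and $F$ is critical. I expect the only point needing care to be the routine bookkeeping at the two ends of the sequence --- verifying that $\{D_0,D_1,D_2\}$ and $\{D_{k-1},D_k,D_{k+1}\}$ are genuine $\V$- or $\W$-faces so that Lemma \ref{lemma-2-17} applies there --- together with the harmless coincidences where $D_{j-1}$ and $D_{j+1}$ happen to be isotopic; all of these dissolve once one works consistently with isotopy classes of disks in $\D(F)$.
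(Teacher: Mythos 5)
Your proof is correct, but it takes a genuinely different route from the paper's. You strengthen the chain argument of Lemma \ref{lemma-3-1}: along a finite distance-realizing sequence, consecutive weak reducing pairs either coincide or lie in a common $\V$- or $\W$-face, and you track the isotopy class \emph{in $\V$} of the thick level lying in $\V$ (which depends only on the $\V$-disk of the pair, being the genus two component of $F_V$ pushed into $\V$). This invariance is exactly the ``Moreover'' clause of Lemma \ref{lemma-2-17} for $\V$-faces and is trivial for $\W$-faces, so a finite distance would force the two thick levels to be isotopic in $\V$, contradicting the hypothesis; criticality then follows from Lemma \ref{lemma-3-3}. The paper instead splits into cases according to whether the compression bodies $\V_i'$ between $\bar F_{V_i}$ and $\partial_+\V$ have connected or disconnected minus boundary: in the mixed case it shows $\mathbf{H}_1$ and $\mathbf{H}_2$ are in fact non-isotopic in $M$ and applies Lemma \ref{lemma-3-1}; in the both-connected case it reduces to two non-isotopic non-separating disks and invokes Theorem 1.1 of \cite{JungsooKim2013}; in the both-disconnected case it shows the distance is infinite because a $\V$-face would require a non-separating $\V$-disk that cannot exist when $V_1$ cuts off $(\text{torus})\times I$. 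Your argument is shorter, uniform across all cases, and avoids the external theorem; the paper's case analysis yields some additional structural information (e.g.\ non-isotopy of the splittings in $M$ in the mixed case) that your proof does not provide but the lemma does not require.
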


\begin{proof}
Say $\mathbf{H}_i=(\V_1^i,\V_2^i;\bar{F}_{V_i})\cup(\W_1^i,\W_2^i;\bar{F}_{W_i})$, where $\partial_-\V_2^i\cap\partial_-\W_1^i\neq\emptyset$ and let $\V_i'$ be the solid between $\bar{F}_{V_i}$ and $\partial_+\V$ in $\V$ for $i=1,2$.
By construction, $V_i$ is a compressing disk of $\V_i'$.
Hence, it is clear that $\V_i'$ is a genus three compression body, where either $\partial_-\V_i'$ consists of (i) a genus two surface if $\partial_-\V_i'$ is connected or (ii) a torus and a genus two surface if $\partial_-\V_i'$ is disconnected for $i=1,2$.\\

\Claim{If one of $\partial_-\V_1'$ and $\partial_-\V_2'$ is connected and the other is disconnected, then $F$ is critical.}

\begin{proofc}
Suppose that $\partial_-\V_1'$ is connected but $\partial_-\V_2'$ is disconnected.

If we consider $V_1$, then either $V_1$ is non-separating in $\V$ or $V_1$ cuts off a solid torus from $\V$ since $\partial_-\V_1'$ is connected.
If $\partial_-\W_1^1$ has a component not belonging to the inner thin level, then this component cannot come from $\partial_-\W$ since $\W_1^1\cap \W$ is the region  in $\W$ between the thick level $\bar{F}_{W_1}$ and the genus two component of $F_{W_1}$ which is homeomorphic to $\bar{F}_{W_1}\times I$ (see Figure \ref{figure-wrc}).
\begin{figure}
\includegraphics[width=13cm]{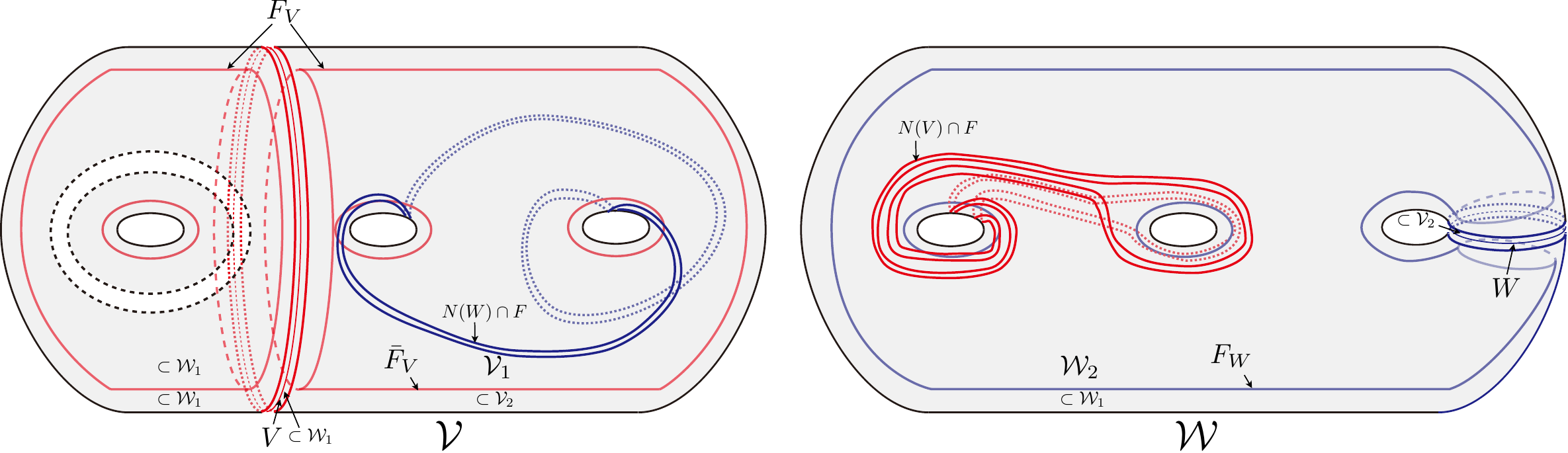}
\caption{$\W_1\cap \W \cong \bar{F}_{W}\times I$ and $\V_2\cap \V \cong \bar{F}_{V}\times I$ \label{figure-wrc}}
\end{figure}
Hence, this component comes from $\partial_-\V$ after cleaning (see Figure (b) or (c) of Figure \ref{figure-5}, where $\partial_-\V$ comes from the top horizontal line) and therefore $V_1$ must cut off $(\text{torus})\times I$ from $\V$, violating the assumption.
Therefore, $\partial_-\W_1^1$ is the inner thin level itself, i.e. $\partial_-\W_1^1\cap\partial_-\V=\emptyset$. 

Now we consider $V_2$.
Since $\V_2'$ is a genus three compression body such that $\partial_-\V_2'$ consists of a torus and a genus two surface, $V_2$ must cut off $(\text{torus})\times I$ from $\V$ by Lemma \ref{lemma-2-19}.
Hence, the region cut by a copy of $V_2$ from $\V$ which is homeomorphic to $(\text{torus})\times I$ would be attached to the product region in $\W$ between the thick level $\bar{F}_{W_2}$ and the genus two component of $F_{W_2}$ to complete $\W_1^2$ (see Figure \ref{figure-wrc} or (b) or (c) of Figure \ref{figure-5}), i.e. $\partial_-\W_1^2\cap\partial_-\V\neq\emptyset$.

This means that if $\mathbf{H}_1$ is isotopic to $\mathbf{H}_2$, then the isotopy cannot take $\W_1^1$ into $\W_1^2$, i.e. it takes $\V_2^1$ into $\W_1^2$ since the isotopy takes the inner thin level of $\mathbf{H}_1$ into that of $\mathbf{H}_2$.
But $\partial_-\V_2^1\cap\partial M\subset \partial_-\W$ if it is nonempty (for example, we can refer to (c) of Figure \ref{figure-5}).
Since the isotopy cannot change $\partial_-\V_2^1\cap \partial M$, we conclude that $\mathbf{H}_1$ cannot be isotopic to $\mathbf{H}_2$ in $M$.
Therefore, $F$ is critical by Lemma \ref{lemma-3-1}.
This completes the proof of Claim.
\end{proofc}

By Claim, we can assume that both $\partial_-\V_1'$ and $\partial_-\V_2'$ are connected or both $\partial_-\V_1'$ and $\partial_-\V_2'$ are disconnected.
If $V_1$ is isotopic to $V_2$ in $\V$, then $\bar{F}_{V_1}$ would be isotopic to $\bar{F}_{V_2}$ in $\V$ in any case, violating the assumption.
Hence, $V_1$ is not isotopic to $V_2$ in $\V$.

Suppose that both $\partial_-\V_1'$ and $\partial_-\V_2'$ are connected.
If $V_i$ is separating in $\V_i'$, then it must cut off a solid torus from $\V_i'$ since $\V_i'$ is a genus three compression body such that $\partial_-\V_i'$ consists of a genus two surface.
Hence, we can take a meridian disk $V_i'$ of the solid torus which $V_i$ cuts off from $\V_i'$ so that it would miss $V_i$.
Moreover, $V_i'\cap W_i=\emptyset$ by Lemma \ref{lemma-2-8}.
That is, we get the $\V$-face $\{V_i',V_i,W_i\}$. 
Hence, we can assume that $V_i$ is non-separating without changing the isotopy class of $\mathbf{H}_i$ and the embedding of $\bar{F}_{V_i}$ in $\V$ up to isotopy by Lemma \ref{lemma-2-17}.
Since $V_1$ is not isotopic to $V_2$ in $\V$ and both disks are non-separating in $\V$, $F$ is critical by Theorem 1.1 of \cite{JungsooKim2013}.

Hence, we can assume that both $\partial_-\V_1'$ and $\partial_-\V_2'$ are disconnected, i.e. each $V_i$ cuts off $(\text{torus})\times I$ from $\V_i'$, so also does in $\V$, for $i=1,2$.
We claim that the distance defined in Definition \ref{definition-distance-wrp} between $(V_1,W_1)$ and $(V_2,W_2)$ is $\infty$.

For the sake of contradiction, assume that the distance is finite.
Then we get a sequence of $\V$- and $\W$-faces $\Delta_0$, $\cdots$, $\Delta_n$   such that
\begin{enumerate}
\item $(V_1, W_1)\subset \Delta_0$ and $(V_2, W_2)\subset \Delta_n$,
\item $\Delta_{i-1}$ shares a weak reducing pair with $\Delta_{i}$ for $i=1, \cdots, n$, 
\end{enumerate}
similarly as in the proof of Lemma \ref{lemma-3-1}.
Since $V_1$ is not isotopic to $V_2$ in $\V$, there must be a $\V$-face among $\Delta_0$, $\cdots$, $\Delta_n$.
Let $\Delta_k$ be the first $\V$-face, i.e. it contains $V_1$.
Here, every $\V$-face contains a non-separating $\V$-disk and the boundary of it must be a non-separating loop in the punctured torus which the boundary of  the separating $\V$-disk in the $\V$-face cuts off from $F$ by  Lemma \ref{lemma-2-9}.
But the condition that $V_1$ cuts off $(\text{torus})\times I$ from $\V$ means that there cannot be such a non-separating $\V$-disk in $\Delta_k$, leading to a contradiction.

Hence, the distance between $(V_1,W_1)$ and $(V_2,W_2)$ is $\infty$, i.e. $F$ is critical by Lemma \ref{lemma-3-3}.

This completes the proof of Lemma \ref{lemma-new}.
\end{proof}

Now we introduce the next lemma dealing with the case when the inner thin level consists of a torus.

\begin{lemma}\label{lemma-3-4}
Let $(\V,\W;F)$ be a weakly reducible, unstabilized, genus three Heegaard splitting in an orientable, irreducible $3$-manifold.
If every weak reducing pair of $F$ gives the same generalized Heegaard splitting obtained by weak reduction up to isotopy such that the inner thin level consists of a torus and the embedding of each thick level in the relevant compression body is also unique up to isotopy, then $\D(F)$ is contractible.
\end{lemma}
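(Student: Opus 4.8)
The plan is to prove contractibility by a Mayer--Vietoris (homotopy pushout) argument built on the decomposition $\D(F)=\DV(F)\cup\DVW(F)\cup\DW(F)$, in which $\DV(F)$ and $\DW(F)$ are contractible by Theorem~\ref{theorem-mc} and, since the vertex sets of $\DV(F)$ and $\DW(F)$ are disjoint, $\DV(F)\cap\DW(F)=\emptyset$. Write $A$ for the intersection (as subspaces of $\D(F)$) of $\DV(F)$ with the union of the closed simplices of $\DVW(F)$; concretely $A$ is the subcomplex of $\DV(F)$ spanned by the $\V$-disks occurring in some weak reducing pair, with $\{V_1,V_2\}$ an edge precisely when $\{V_1,V_2,W\}$ is a $\V$-face for some $W$. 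Let $B$ be the analogous subcomplex of $\DW(F)$. All the inclusions involved are inclusions of CW subcomplexes, hence cofibrations, so it suffices to prove three things: (a) $\DVW(F)$ is contractible, (b) $A$ is contractible, (c) $B$ is contractible. Granting these, $\DV(F)\cup\DVW(F)$ is the pushout of $\DV(F)\hookleftarrow A\hookrightarrow\DVW(F)$, a homotopy pushout of contractible spaces over a contractible space, hence contractible; gluing $\DW(F)$ to it along $B$ then finishes.

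Next I would pin down the structure of weak reducing pairs. Let $\bar{F}_{V}$ be the thick level lying in $\V$, which by hypothesis is unique up to isotopy in $\V$, and let $\V'\subset\V$ be the genus three compression body it cuts off together with $\partial_{+}\V$, as in the paragraph preceding Lemma~\ref{lemma-new}. Because the inner thin level is a single torus, $\partial_{-}\V'$ is a genus two surface, possibly together with one torus, and every $\V$-disk occurring in a weak reducing pair is a compressing disk of the \emph{fixed} compression body $\V'$. By Lemma~\ref{lemma-2-19}: if $\partial_{-}\V'$ is disconnected then $\V'$ has a unique compressing disk and it is separating; if $\partial_{-}\V'$ is connected then $\V'$ has a unique non-separating disk $V^{*}$, and every other compressing disk of $\V'$ cuts off a solid torus and is therefore, by Lemma~\ref{lemma-2-9} and Definition~\ref{definition-2-15}, a separating band-sum of two parallel copies of $V^{*}$ that forms a $\V$-face together with $V^{*}$. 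In either case there is a distinguished $\V$-disk $V^{*}$ such that every vertex of $A$ is $V^{*}$ or is joined to $V^{*}$ by an edge of $A$ (and $A$ has no $2$-simplices by Lemma~\ref{lemma-2-18}); hence $A$ is the star of $V^{*}$ in $A$, a cone, hence contractible. The same argument on the $\W$ side produces a distinguished $W^{*}$ and the contractibility of $B$, proving (b) and (c).

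For (a) I would show that $\DVW(F)$ collapses onto $V^{*}$. When $\partial_{-}\V'$ or $\partial_{-}\W'$ is disconnected this is easy: Lemma~\ref{lemma-2-19} then forces the unique disk on that side into every weak reducing pair, so $\DVW(F)$ is a single facial cluster, contractible by the observation following Definition~\ref{definition-2-15}. When both $\partial_{-}\V'$ and $\partial_{-}\W'$ are connected, the two ingredients I would establish are: first, that $(V^{*},W^{*})$ is itself a weak reducing pair --- this should follow by a ``shrinking'' procedure, replacing each separating hand in a given weak reducing pair by its center, since passing from a hand to its center only pushes a boundary curve into a once-punctured torus it already bounds, keeping disjointness with the other disk (here Lemma~\ref{lemma-2-8} controls which component of the complement the other boundary lies in, ruling out the reducing-pair case); and second, that for an arbitrary weak reducing pair $(V,W)$ one has $V^{*}$ disjoint from both $V$ and $W$ and likewise $W^{*}$, so that $\{V^{*},V,W^{*},W\}$ spans a simplex of $\DVW(F)$ (at most $3$-dimensional by Lemma~\ref{lemma-2-18}). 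Granting these, every simplex of $\DVW(F)$ is a face of a simplex containing $V^{*}$, so $\DVW(F)$ is the star of $V^{*}$, hence contractible.

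The main obstacle is precisely the combinatorial heart of step (a): showing, when both $\partial_{-}\V'$ and $\partial_{-}\W'$ are connected and there are infinitely many hands on each side by Lemma~\ref{lemma-2-16}, that every weak reducing pair is genuinely ``anchored'' at $V^{*}$ and $W^{*}$ --- that is, ruling out exotic pairs $(V',W')$ of two separating hands whose boundary curves interact so that $V^{*}$ (or $W^{*}$) fails to be disjoint from one of $V',W'$. This is where the once-punctured-torus analysis underlying Lemmas~\ref{lemma-2-9} and~\ref{lemma-2-14}, together with Lemma~\ref{lemma-2-8}, has to be pushed further, to show that the punctured tori cut off by $\partial V'$ and $\partial W'$ are disjoint or compatibly nested. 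Once this structural picture is secured, the remainder is the formal homotopy-pushout bookkeeping of the first paragraph.
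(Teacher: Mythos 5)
Your proposal follows essentially the same route as the paper: the same decomposition $\D(F)=\DV(F)\cup\DVW(F)\cup\DW(F)$ glued along contractible ($\ast$-shaped) intersections, the same use of Lemma~\ref{lemma-2-19} together with the uniqueness of the thick level's embedding in $\V'$ and $\W'$ to pin down a distinguished pair $(V^*,W^*)=(\bar V,\bar W)$, and the same conclusion that $\DVW(F)$ is the star of that pair (a union of $3$-simplices through the edge $\{\bar V,\bar W\}$ in the case both $\partial_-\V'$ and $\partial_-\W'$ are connected, a single facial cluster or a single edge otherwise). The one step you flag as ``the main obstacle''---ruling out a weak reducing pair of two separating disks whose meridians fail to be disjoint from everything---closes immediately with the tool you already name: by Lemma~\ref{lemma-2-8}, $\partial W^\ast$ lies in the genus-two component of $F-\partial V^\ast$ and $\partial V^\ast$ lies in the genus-two component of $F-\partial W^\ast$, so the two once-punctured tori they cut off are disjoint, and the meridians $\tilde V,\tilde W$ chosen inside them miss $V^\ast\cup W^\ast$ and each other, yielding the $3$-simplex $\{V^\ast,\tilde V,\tilde W,W^\ast\}$; no further nesting analysis is needed.
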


\begin{proof}
Let $(V, W)$ be a weak reducing pair for $(\V,\W;F)$ and $T$ the inner thin level of the generalized Heegaard splitting obtained by weak reduction.

If one of $V$ and $W$, say $V$, cuts off a solid torus from $\V$, then $\partial W$ cannot belong to the once-punctured torus that $\partial V$ cuts off from $F$ by Lemma \ref{lemma-2-8}.
Hence, we can take a non-separating disk $V'$ from the solid torus and we can assume that $V' \cap (V\cup W)=\emptyset$.
Of course, the generalized Heegaard splitting obtained by weak reduction along $(V,W)$ is the same as the one obtained by weak reduction along $(V',W)$ and the embeddings of thick levels in each compression body are the same  up to isotopy  by Lemma \ref{lemma-2-17}.
Hence, without loss of generality, there are three types of the generalized Heegaard splittings obtained by weak reductions as follows, where these cases coming from the shape of the two compression bodies sharing the inner thin level. 
\begin{enumerate}[(a)]
\item We can assume that $V$ and $W$ are non-separating in $\V$ and $\W$ respectively, i.e. the minus boundaries of the two compression bodies are connected (see (a) of Figure \ref{figure-5}), \label{as-a}
\item $V$ cuts off $(\text{torus})\times I$ from $\V$ and we can assume that $W$ is non-separating in $\W$, i.e. the minus boundary of one compression body is connected but that of the other is disconnected (see  (b) of Figure \ref{figure-5}), or \label{as-b}
\item each of $V$ and $W$ cuts off $(\text{torus})\times I$ from $\V$ or $\W$ respectively, i.e. the minus boundaries of the two compression bodies are disconnected (see (c) of Figure \ref{figure-5}),\label{as-c}
\end{enumerate}
where these three cases are mutually exclusive by the assumption that the generalized Heegaard splitting obtained by weak reduction is unique up to isotopy.
Note that there is the symmetric case for (\ref{as-b}) when $W$ cuts off $(\text{torus})\times I$ from $\W$, but the shape of the generalized Heegaard splitting is just the one obtained by turning the figure upside down.\\

\begin{figure}
\includegraphics[width=13cm]{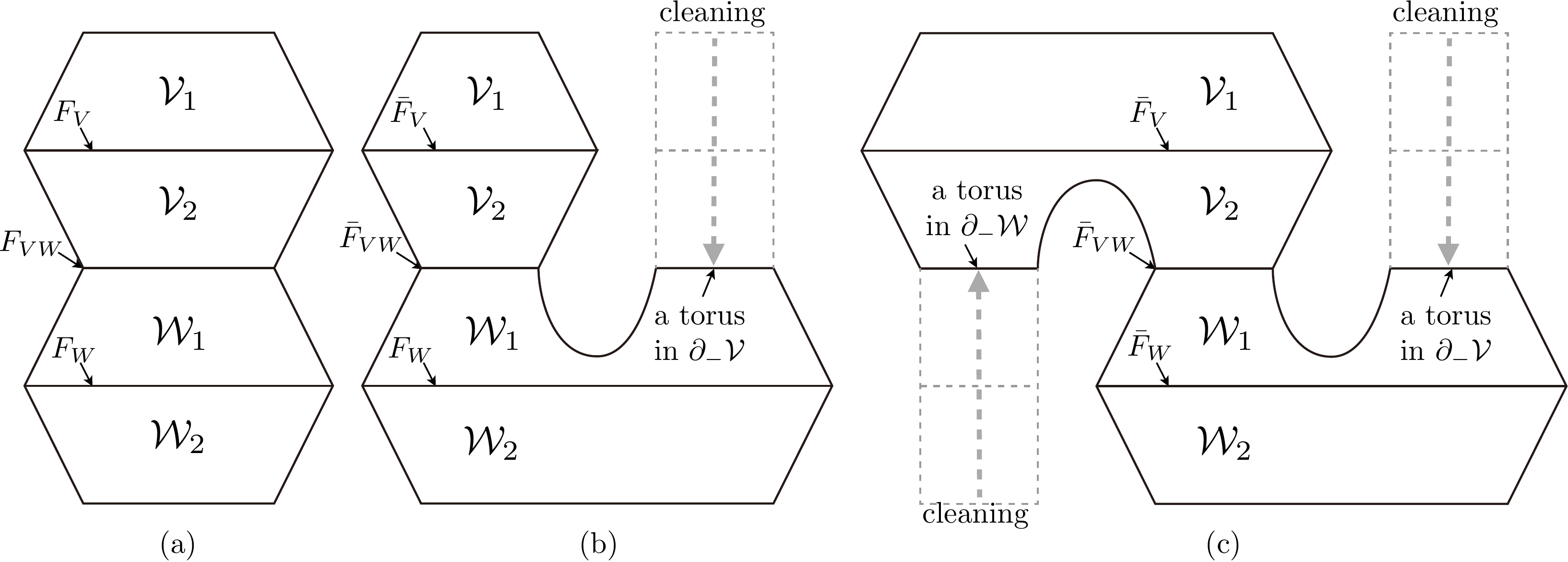}
\caption{the GHSs for the three cases \label{figure-5}}
\end{figure}

\Case{(\ref{as-a})} We can assume that $V$ and $W$ are non-separating in $\V$ and $\W$ respectively.

In this case, $\partial V\cup\partial W$ is non-separating in $F$ since the inner thin level consists of a torus.

Let us consider the generalized Heegaard splitting obtained by weak reduction along $(V,W)$. 
Here, we can see that the pre-weak reduction along $(V,W)$ is exactly the same as the weak reduction along $(V,W)$.
Hence, it consists of two splittings $(\V_1, \V_2;F_V)$ and $(\W_1, \W_2;F_W)$ such that $\partial_- \V_2=\partial_-\W_1=T$.
Let $\V'$ ($\W'$ resp.) be the solid between $\partial_+\V$ and $F_V$ in $\V$ ($\partial_+\W$ and $F_W$ in $\W$ resp.).
Then we can see that $V$ and $W$ are non-separating compressing disks of $\V'$ and $\W'$ obviously.
Since $\V'$ and $\W'$ are genus three compression bodies with minus boundary consisting of a genus two surface, $V$ and $W$ are uniquely determined in $\V'$ and $\W'$ respectively up to isotopy.
Hence, we get the induced isotopy classes of $V$ and $W$ in $\V$ and $\W$ from the isotopy classes of $V$ and $W$ in $\V'$ and $\W'$.
Moreover, the uniqueness of the isotopy classes of the embeddings of the thick levels of the generalized Heegaard splitting obtained by weak reduction from $(\V,\W;F)$ in the relevant compression bodies forces the choice of the induced isotopy classes of $V$ and $W$ in $\V$ and $\W$ to be unique.
This means that we can consider the weak reducing pair $(V,W)$ as the fixed one $(\bar{V},\bar{W})$ even though we've chosen an arbitrary weak reducing pair consisting of non-separating disks.

Let us consider an arbitrary weak reducing pair $(V^\ast, W^\ast)$ for $(\V,\W;F)$.

If both $V^\ast$ and $W^\ast$ are non-separating, then $(V^\ast, W^\ast)$ must be $(\bar{V},\bar{W})$ by the previous argument and therefore $\partial V^\ast\cup\partial W^\ast$ is non-separating in $F$.
This means that we can take a band-sum of two parallel copies of $V^\ast$ in $\V$, say $V'$, and a band-sum of two parallel copies of $W^\ast$ in $\W$, say $W'$, such that $\{V',V^\ast=\bar{V},W^\ast=\bar{W},W'\}$ forms a $3$-simplex.

If exactly one of $V^\ast$ and $W^\ast$ is non-separating, say $V^\ast$, then $W^\ast$ must cut off a solid torus from $\W$ (otherwise, $W^\ast$ cuts off $(\text{torus})\times I$ from $\W$ and therefore the generalized Heegaard splitting obtained by weak reduction along $(V^\ast,W^\ast)$ would be the symmetric case of (\ref{as-b}) or (\ref{as-c}) of Figure \ref{figure-5}, violating the uniqueness of the generalized Heegaard splitting obtained by weak reduction up to isotopy). 
If we choose the meridian disk of the solid torus which $W^\ast$ cuts off from $\W$ so that it would miss $W^\ast$, say $\tilde{W}$, then $\tilde{W}$ also misses $V^\ast$ by Lemma \ref{lemma-2-8}.
Hence, $\partial V^\ast$ belongs to the genus two component of $F-\partial W^\ast$ and therefore we can take a band-sum of two parallel copies of $V^\ast$ in $\V$ which misses $W^\ast$, say $V'$.
That is, $\{V',V^\ast,\tilde{W},W^\ast\}$ forms a $3$-simplex in $\DVW(F)$ and the weak reducing pair $(V^\ast,\tilde{W})$ consisting of non-separating disks must be $(\bar{V},\bar{W})$ by the previous argument.

If both $V^\ast$ and $W^\ast$ are separating, then we can find a weak reducing pair $(\tilde{V}, \tilde{W})$ consisting of non-separating disks, where $\tilde{V}$ ($\tilde{W}$ resp.) comes from the meridian disk of the solid torus which $V^\ast$ ($W^{\ast}$ resp.) cuts off from the corresponding compression body.
Here, we can assume that $\tilde{V}\cup\tilde{W}$ misses $V^\ast\cup W^\ast$, i.e. $\{V^\ast,\tilde{V},\tilde{W}, W^\ast\}$ forms a $3$-simplex in $\DVW(F)$.
If we apply the previous argument to $(\tilde{V},\tilde{W})$, then $(\tilde{V},\tilde{W})$ would be $(\bar{V}, \bar{W})$.

This means that an arbitrary weak reducing pair $(V^\ast, W^\ast)$ of $(\V,\W;F)$ belongs to some $3$-simplex of the form $\Sigma_{V' W'}=\{V',\bar{V},\bar{W}, W'\}$ in $\DVW(F)$ containing the fixed $1$-simplex $\{\bar{V},\bar{W}\}$, where $V'\subset \V$ and $W'\subset \W$ are band-sums of two parallel copies of $\bar{V}$ and $\bar{W}$ in $\V$ and $\W$ respectively by Lemma \ref{lemma-2-18}.\\

\ClaimN{A}{
$\DVW(F)=\bigcup_{V', W'} \Sigma_{V' W'}$ for all possible $V'$ and $W'$.}

\begin{proofN}{Claim A}
Since $\bigcup_{V', W'} \Sigma_{V' W'}\subset \DVW(F)$ is obvious, we will prove that every simplex of $\DVW(F)$ belongs to some $\Sigma_{V' W'}$.

By definition of $\DVW(F)$ and the assumption that every weak reducing pair belongs to some $\Sigma_{V' W'}$, we don't need to consider  vertices or $1$-simplices in $\DVW(F)$.

If there is a $2$-simplex $\Delta$ in $\DVW(F)$, then it must be a $\V$-face or a  $\W$-face.
Otherwise, we can assume that $\Delta\subset \DV(F)$ without loss of generality and there must be a vertex in $\DW(F)$ such that $\Delta$ forms a $3$-simplex in $\DVW(F)$ together with this vertex by the definition of $\DVW(F)$.
Hence, three vertices of the $3$-simplex come from $\DV(F)$, violating Lemma \ref{lemma-2-18}.
Without loss of generality, suppose that $\Delta$ is a $\V$-face.
That is, there is a non-separating $\V$-disk and a separating $\V$-disk in $\Delta$ by Lemma \ref{lemma-2-9}.
If the $\W$-disk of $\Delta$ is separating, then it cannot cut off $(\text{torus})\times I$ from $\W$ by the uniqueness of the generalized Heegaard splitting obtained by weak reduction (otherwise, the generalized Heegaard splitting obtained by weak reduction along a weak reducing pair containing the $\W$-disk would be that of the symmetric case of (\ref{as-b}) or (\ref{as-c})), i.e. it cuts off a solid torus from $\W$.
Hence, we can choose a meridian disk $\tilde{W}$ of the solid torus which the $\W$-disk cuts off from $\W$ and it misses three vertices of $\Delta$ by Lemma \ref{lemma-2-8}.
Hence, $\Delta$ and $\tilde{W}$ form a $3$-simplex in $\DVW(F)$.
If the $\W$-disk of $\Delta$ is non-separating, then the boundary of this $\W$-disk must be contained in the genus two component which the boundary of the separating $\V$-disk of $\Delta$ cuts off from $F$ by Lemma \ref{lemma-2-8} and the boundary of the non-separating $\V$-disk of $\Delta$ must be contained in the genus one component by Lemma \ref{lemma-2-9}.
Hence, it is easy to find a band-sum of two parallel copies of the $\W$-disk in $\W$ so that it misses the three disks of $\Delta$, say $W'$, i.e. $\Delta$ and $W'$ form a $3$-simplex in $\DVW(F)$.
In any case, $\Delta$ belongs to a $3$-simplex $\Sigma$ but $\Sigma$ must contain a weak reducing pair consisting of non-separating disks by Lemma \ref{lemma-2-18}.
Since the choice of such a weak reducing pair is unique as $(\bar{V},\bar{W})$ by the previous argument, $\Sigma$ is of the form $\Sigma_{V' W'}$, leading to the result.

If there is a $3$-simplex $\Sigma'$ in $\DVW(F)$, then it must contain a weak reducing pair consisting of non-separating disks by Lemma \ref{lemma-2-18}.
But we can see that this weak reducing pair must be $(\bar{V},\bar{W})$ by the previous argument.
Therefore, $\Sigma'$ is of the form $\Sigma_{V' W'}$, leading to the result.

We don't need to consider more high-dimensional simplex in $\DVW(F)$ by Lemma \ref{lemma-2-18}.
This completes the proof of Claim A.
\end{proofN}

By Claim A, $\DVW(F)=\bigcup_{V', W'} \Sigma_{V' W'}$ for all possible $V'$ and $W'$.
Hence, we can see that $\DVW(F)\cap \DV(F)$ is a $\ast$-shaped graph since every $3$-simplex $\Sigma_{V'W'}$ intersects $\DV(F)$ in an edge and the intersections coming from these $3$-simplices have the common vertex $\bar{V}$.
The symmetric argument also holds for $\DVW(F)\cap\DW(F)$.
Moreover, we can see that if $\Sigma_{V' W'}\neq  \Sigma_{V'' W''}$, then $\Sigma_{V' W'}\cap  \Sigma_{V'' W''}$ is either (i) the weak reducing pair $\{\bar{V},\bar{W}\}$, (ii) the $\V$-face $\{V'=V'', \bar{V},\bar{W}\}$ or (iii) the $\W$-face $\{\bar{V},\bar{W},W'=W''\}$.\\

\ClaimN{B}{$\D(F)$ is contractible.}

\begin{proofN}{Claim B}
If we consider $\D(F)$, then we get 
$$\D(F)=\DV(F)\cup \DVW(F)\cup \DW(F),$$ where the follows hold.
\begin{enumerate}
\item $\DVW(F)\cap \DV(F)$ is a $\ast$-shaped graph,
\item $\DVW(F)\cap \DW(F)$ is a $\ast$-shaped graph, and
\item $\DV(F)\cap \DW(F)=\emptyset$.
\end{enumerate}
In the Chapter 5 in \cite{8}, McCullough proved that $\DV(F)$ and $\DW(F)$ are contractible (Theorem \ref{theorem-mc}) in the sense that they are CW-complexes.
Moreover, we can consider $\DVW(F)$ as a CW-complex.
Recall that $\DVW(F)=\bigcup_{V', W'} \Sigma_{V' W'}$ for all possible $V'$ and $W'$ by Claim A,  where $\Sigma_{V'W'}$ is the $3$-simplex $\{V', \bar{V}, \bar{W}, W'\}$.
Hence, we can construct $\DVW(F)$ from discrete $0$-cells (the vertices of the two $\ast$-shaped graphs $\DVW(F)\cap \DV(F)$ and $\DVW(F)\cap \DW(F)$), followed by $1$-cells (consider the edges of each $\Sigma_{V' W'}$), followed by $2$-cells (consider the faces of each $\Sigma_{V' W'}$), and finally followed by $3$-cells (consider each $\Sigma_{V' W'}$ itself) via attaching maps as in the inductive definition of a CW-complex.

First, we prove that $\DVW(F)$ itself is contractible.
It is sufficient to prove that there is a strong deformation-retraction of $\DVW(F)$ into $\bar{V}$, i.e. a continuous map $h:\DVW(F)\times I \to \DVW(F)$ such that (i) $h(x,0)=x$  for $x\in\DVW(F)$, (ii) $h(x,1)=\bar{V}$ for $x\in\DVW(F)$, and (iii) $h(\bar{V},t)=\bar{V}$ for $0\leq t \leq 1$.

Let us consider the $\ast$-shaped graph $\DVW(F)\cap \DV(F)$.
Recall that $\DVW(F)=\bigcup_{V',W'}\Sigma_{V'W'}$.
If we deformation-retract the $\ast$-shaped graph $\DVW(F)\cap \DV(F)$ into the center point $\bar{V}$ continuously, then each $\Sigma_{V'W'}$ becomes $\{\bar{V},\bar{W},W'\}$ continuously, i.e. $\DVW(F)$ becomes the $\W$-facial cluster containing the weak reducing pair $(\bar{V},\bar{W})$ continuously (see the first arrow of Figure \ref{fig-complex-2}).
We take this process as $h:\DVW(F)\times \left[0,\frac{1}{3}\right]\to \DVW(F)$.
\begin{figure}
	\includegraphics[width=8cm]{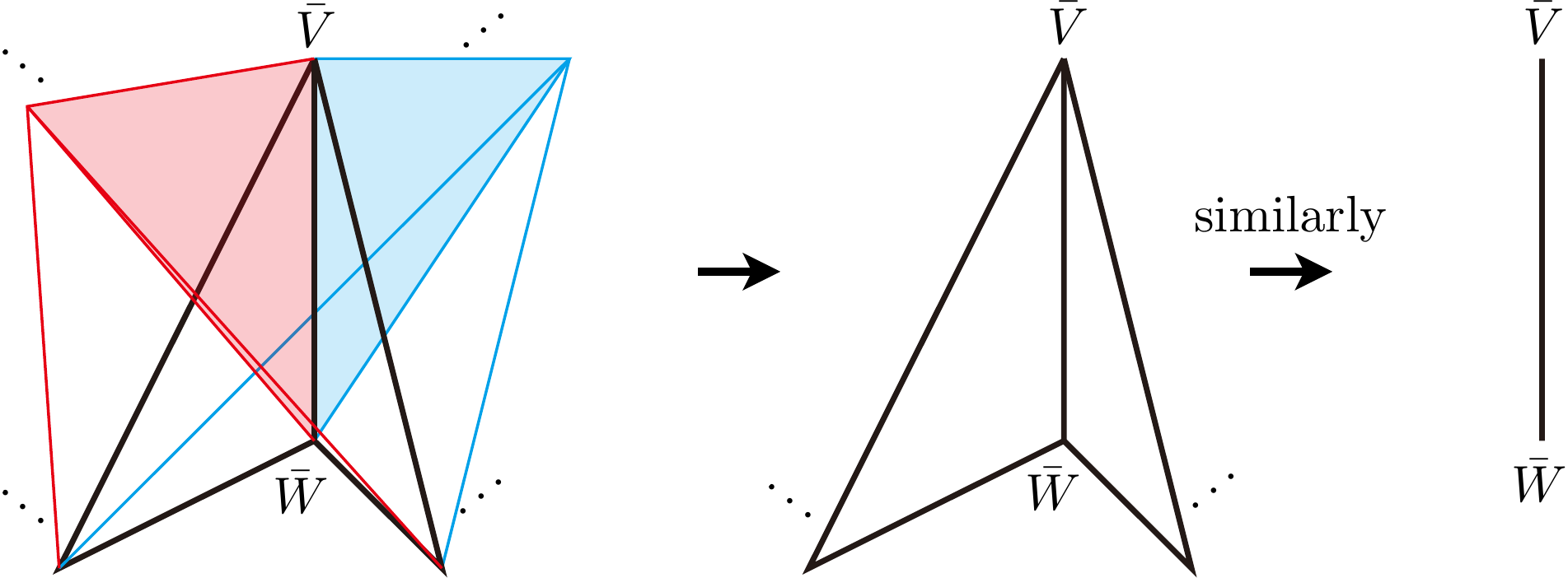}
	\caption{$\DVW(F)$ is contractible.\label{fig-complex-2}}
\end{figure}

Next, if we deformation-retract the $\ast$-shaped graph $\DVW(F)\cap \DW(F)$ into the center point $\bar{W}$ again, then the $\W$-facial cluster containing $(\bar{V},\bar{W})$ becomes $(\bar{V},\bar{W})$ continuously similarly (see the second arrow of Figure \ref{fig-complex-2}).
We take this process as $h:\DVW(F)\times \left[\frac{1}{3},\frac{2}{3}\right]\to \DVW(F)$.

Finally, if we deformation-retract $(\bar{V},\bar{W})$ into $\bar{V}$, then it becomes $\bar{V}$ continuously.
We take this process as $h:\DVW(F)\times \left[\frac{2}{3},1\right]\to \DVW(F)$.\\

Let us introduce the following Lemma.

\begin{lemma}[Exercise 23 of Chapter 0 of \cite{Hatcher2002}]\label{lemma-Hatcher}
A CW complex is contractible if it is the union of two contractible subcomplexes whose intersection is also contractible.
\end{lemma}

Then we can see that $\DV(F)\cup\DVW(F)$ is also a CW-complex.
Moreover, each of $\DV(F)$ and $\DVW(F)$ is a subcomplex of $\DV(F)\cup\DVW(F)$.
Therefore, $\DV(F)\cup\DVW(F)$ is contractible by Lemma \ref{lemma-Hatcher} since $\DV(F)$ is contractible by Theorem \ref{theorem-mc}, $\DVW(F)$ is contractible by the previous observation, and $\DV(F)\cap\DVW(F)$ is a $\ast$-shaped graph which is a contractible one.
Hence, we can see that $\D(F)=(\DV(F)\cup\DVW(F))\cup\DW(F)$ is also contractible similarly by Lemma \ref{lemma-Hatcher}.

This completes the proof of Claim B.
\end{proofN}

\Case{(\ref{as-b})} 
$V$ cuts off $(\text{torus})\times I$ from $\V$ and we can assume that $W$ is non-separating in $\W$.
In this case, $\partial W$ belongs to the genus two component of $F-\partial V$ by Lemma \ref{lemma-2-8}.
Hence, we can take a separating disk $W'\subset \W$ by a band-sum of two parallel copies of $W$ such that $W'$ misses $V$.
Here, we get a $2$-simplex $\Delta=\{V, W, W'\}$ in $\DVW(F)$.
Let $\varepsilon$ be the $\W$-facial cluster containing $\Delta$ guaranteed by Lemma \ref{lemma-2-16}.

If we consider the generalized Heegaard splitting obtained by weak reduction along $(V,W)$, then it consists of two splittings $(\V_1, \V_2;\bar{F}_V)$ and $(\W_1, \W_2;F_W)$ such that $\partial_- \V_2\cap\partial_-\W_1=T$, where $\bar{F}_V$ comes from the genus two component of $F_V$, $\partial_-\V_2$ consists of a torus, and $\partial_-\W_1$ consists of two tori.

Let $\V'$ ($\W'$ resp.) be the solid between $\partial_+\V$ and $\bar{F}_V$ in $\V$ ($\partial_+\W$ and $F_W$ in $\W$ resp.).
Then we can see that $V$ is a separating compressing disk of $\V'$ and $W$ is a non-separating compressing disk of $\W'$ obviously.
Since $\V'$ is a genus three compression body with minus boundary consisting of a torus and a genus two surface and $\W'$ is a compression body with minus boundary consisting of a genus two surface, $V$ and $W$ are uniquely determined in $\V'$ and $\W'$ respectively up to isotopy.
Hence, we get the induced isotopy classes of $V$ and $W$ in $\V$ and $\W$ from the isotopy classes of $V$ and $W$ in $\V'$ and $\W'$.
Moreover, the uniqueness of the isotopy classes of the embeddings of the thick levels of the generalized Heegaard splitting obtained by weak reduction from $(\V,\W;F)$ in the relevant compression bodies forces the choice of the induced isotopy classes of $V$ and $W$ in $\V$ and $\W$ to be unique.
This means that we can consider the weak reducing pair $(V,W)$ as the fixed one $(\bar{V},\bar{W})$ even though we've chosen an arbitrary weak reducing pair consisting of a $\V$-disk cutting off $(\text{torus})\times I$ from $\V$  and a non-separating $\W$-disk.

Let us consider an arbitrary weak reducing pair $(V^\ast, W^\ast)$ for $(\V,\W;F)$.
If $W^\ast$ cuts off $(\text{torus})\times I$ from $\W$, then the generalized Heegaard splitting obtained by weak reduction along $(V^\ast, W^\ast)$ would be the symmetric case of (\ref{as-b}) or (\ref{as-c}), violating the uniqueness of the generalized Heegaard splitting obtained by weak reduction up to isotopy.
Hence, $W^\ast$ does not cut off $(\text{torus})\times I$ from $\W$.
Moreover, if $V^\ast$ does not cut off $(\text{torus})\times I$ from $\V$, then $V^\ast$ is non-separating in $\V$ or it cuts off a solid torus from $\V$, i.e. the generalized Heegaard splitting obtained by weak reduction along $(V^\ast, W^\ast)$ would be that of (\ref{as-a}) or the symmetric case of (\ref{as-b}), leading to a contradiction by the same argument.
Therefore, $V^\ast$ cuts off $(\text{torus})\times I$ from $\V$.
If $W^\ast$ is non-separating in $\W$, then we take $W^{\ast\ast}=W^\ast$.
If $W^\ast$ is separating, then we take $W^{\ast\ast}$ as the meridian disk of the solid torus that $W^\ast$ cuts off from $\W$ and $W^{\ast\ast}\cap V^\ast=\emptyset$ by Lemma \ref{lemma-2-8}.
If we apply the arguments in the previous paragraph to the weak reducing pair $(V^\ast, W^{\ast\ast})$, then  $(V^\ast, W^{\ast\ast})$ would be $(\bar{V},\bar{W})$ and therefore the weak reducing pair $(V^\ast,W^\ast)$  belongs to the $\W$-facial cluster $\varepsilon$.
This means that every weak reducing pair of $(\V,\W;F)$ belongs to the $\W$-facial cluster $\varepsilon$.\\

\ClaimN{C}{
$\DVW(F)=\varepsilon$}

\begin{proofN}{Claim C}
It is sufficient to show that every simplex of $\DVW(F)$ belongs to $\varepsilon$.

By definition of $\DVW(F)$ and the assumption that every weak reducing pair belongs to $\varepsilon$, we don't need to consider vertices or $1$-simplices in $\DVW(F)$.

If we use the same argument in the proof of Claim A, then we can see that if there is a $2$-simplex $\Delta$ in $\DVW(F)$, then it must be a $\V$-face or a  $\W$-face.
If $\Delta$ is a $\V$-face, then a $\V$-disk of $\Delta$ cuts off a solid torus from $\V$ and the other $\V$-disk is a meridian disk of the solid torus by Lemma \ref{lemma-2-9}, i.e. the generalized Heegaard splitting obtained by weak reduction along any weak reducing pair in $\Delta$ would be that of (\ref{as-a}) or the symmetric case of (\ref{as-b}), violating the uniqueness of the generalized Heegaard splitting obtained by weak reduction up to isotopy.

Hence, $\Delta$ must be a $\W$-face.
In this case, we can prove that the $\V$-disk of $\Delta$, say $V'$, must cut off $(\text{torus})\times I$ from $\V$ by the uniqueness of the generalized Heegaard splitting obtained by weak reduction up to isotopy and there is a non-separating $\W$-disk in $\Delta$ by Lemma \ref{lemma-2-9}, say $\tilde{W}$.
If we use the previous argument, then the weak reducing pair $(V',\tilde{W})$ would be $(\bar{V},\bar{W})$ and therefore $\Delta\subset\varepsilon$, leading to the result.

If there is a $3$-simplex $\Sigma$ in $\DVW(F)$, then it must contain a $\V$-face in $\Sigma$ by Lemma \ref{lemma-2-18}, i.e. we get a contradiction similarly as the previous $\V$-face case.

We don't need to consider more high-dimensional simplex in $\DVW(F)$ by Lemma \ref{lemma-2-18}.
This completes the proof of Claim C.
\end{proofN}

We can prove that $\D(F)$ is contractible similarly as Case (\ref{as-a}).\\

\Case{(\ref{as-c})}
Each of $V$ and $W$ cuts off $(\text{torus})\times I$ from $\V$ or $\W$ respectively.

If we consider the generalized Heegaard splitting obtained by weak reduction along $(V,W)$, then it consists of two splittings $(\V_1, \V_2;\bar{F}_V)$ and $(\W_1, \W_2;\bar{F}_W)$ such that $\partial_- \V_2\cap\partial_-\W_1=T$, where $\bar{F}_V$ and $\bar{F}_W$ come from the genus two components of $F_V$ and $F_W$ respectively and both $\partial_-\V_2$ and $\partial_-\W_1$ consist of two tori.

Let $\V'$ ($\W'$ resp.) be the solid between $\partial_+\V$ and $\bar{F}_V$ in $\V$ ($\partial_+\W$ and $\bar{F}_W$ in $\W$ resp.).
Then we can see that $V$ and $W$ are separating compressing disks of $\V'$ and $\W'$ respectively obviously.
Since $\V'$ and $\W'$ are genus three compression bodies with minus boundary consisting of a torus and a genus two surface, $V$ and $W$ are uniquely determined in $\V'$ and $\W'$ respectively up to isotopy.
Hence, we get the induced isotopy classes of $V$ and $W$ in $\V$ and $\W$ from the isotopy classes of $V$ and $W$ in $\V'$ and $\W'$.
Moreover, the uniqueness of the isotopy classes of the embeddings of the thick levels of the generalized Heegaard splitting obtained by weak reduction from $(\V,\W;F)$ in the relevant compression bodies forces the choice of the induced isotopy classes of $V$ and $W$ in $\V$ and $\W$ to be unique.
This means that we can consider the weak reducing pair $(V,W)$ as the fixed one $(\bar{V},\bar{W})$ even though we've chosen an arbitrary weak reducing pair consisting of disks which cut off $(\text{torus})\times I$s in the relevant compression bodies.

Let us consider an arbitrary weak reducing pair $(V^\ast, W^\ast)$ for $(\V,\W;F)$.
If one of $V^\ast$ and $W^\ast$ does not cut off $(\text{torus})\times I$ from $\V$ or $\W$ respectively, then the generalized Heegaard splitting obtained by weak reduction along $(V^\ast, W^\ast)$ would be that of (\ref{as-a}) or (possibly the symmetric case of) (\ref{as-b}), violating the uniqueness of the generalized Heegaard splitting obtained by weak reduction up to isotopy.
Therefore, $V^\ast$ and $W^\ast$ must cut off $(\text{torus})\times I$ from $\V$ and $\W$ respectively.
If we apply the arguments in the previous paragraph to the weak reducing pair $(V^\ast, W^{\ast})$, then  $(V^\ast, W^{\ast})$ would be $(\bar{V},\bar{W})$.
Hence, $\DVW(F)$ is just $(\bar{V},\bar{W})$ itself.

Therefore, $\D(F)$ is contractible obviously.

This completes the proof of Lemma \ref{lemma-3-4}.
\end{proof}

The next lemma deals with the case when the inner thin level consists of two tori.

\begin{lemma}\label{lemma-3-5}
Let $(\V,\W;F)$ be a weakly reducible, unstabilized, genus three Heegaard splitting in an orientable, irreducible $3$-manifold.
If every weak reducing pair of $F$ gives the same generalized Heegaard splitting obtained by weak reduction up to isotopy such that the inner thin level consists of two tori and the embedding of each thick level in the relevant compression body is also unique up to isotopy, then $\D(F)$ is contractible.
\end{lemma}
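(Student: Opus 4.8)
The plan is to show that these hypotheses pin down the weak reducing pair of $F$ completely, so that $\DVW(F)$ is a single $1$-simplex and the contractibility of $\D(F)$ drops out of that of $\DV(F)$ and $\DW(F)$. First I would note that, because the common generalized Heegaard splitting $\mathbf{H}$ produced by weak reduction has a disconnected inner thin level, the case analysis carried out in the proof of the Claim inside Lemma \ref{lemma-2-17} forces \emph{every} weak reducing pair $(V,W)$ of $F$ to consist of two non-separating disks with $\partial V\cup\partial W$ separating in $F$; in particular $F_V$ and $F_W$ are connected genus two surfaces, and after pushing them slightly into $\V$ and $\W$ they are precisely the thick levels $\bar F_V\subset\V$ and $\bar F_W\subset\W$ of $\mathbf{H}$.

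Next I would prove uniqueness of the weak reducing pair. Given any weak reducing pair $(V,W)$, let $\V'$ be the closure of the component of $\V-\bar F_V$ adjacent to $F=\partial_+\V$. Then $\V'$ is a genus three compression body with $\partial_-\V'=\bar F_V$ a connected genus two surface, and $V$ is a non-separating disk of $\V'$, hence unique up to isotopy in $\V'$ by Lemma \ref{lemma-2-19}(\ref{lemma-2-19-1}). Since the hypothesis guarantees the embedding of $\bar F_V$ in $\V$ is unique up to isotopy, the region $\V'$ is unique up to isotopy in $\V$, so $V$ is unique up to isotopy in $\V$; the symmetric argument handles $W$ in $\W$. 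Thus $F$ has a unique weak reducing pair $(\bar V,\bar W)$ up to isotopy.

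It then remains to identify $\DVW(F)$. Every $1$-simplex of $\DVW(F)$ with a vertex in $\DV(F)$ and a vertex in $\DW(F)$ is a weak reducing pair, so the only such simplex is $\{\bar V,\bar W\}$; a $\V$-face or $\W$-face would contain two distinct weak reducing pairs sharing a common disk, which cannot occur, and — arguing exactly as in the proof of Lemma \ref{lemma-3-4} (Claim A) — every $2$-simplex of $\DVW(F)$ is a $\V$-face or a $\W$-face, while Lemma \ref{lemma-2-18} forbids simplices of dimension $\geq 3$. Hence $\DVW(F)$ is the single $1$-simplex $\{\bar V,\bar W\}$. Since $\DV(F)\cap\DW(F)=\emptyset$, $\DV(F)\cap\DVW(F)=\{\bar V\}$, and $\DW(F)\cap\DVW(F)=\{\bar W\}$, and since $\DV(F)$ and $\DW(F)$ are contractible by Theorem \ref{theorem-mc} and a $1$-simplex is contractible, two applications of Lemma \ref{lemma-Hatcher} show that $\D(F)=(\DV(F)\cup\DVW(F))\cup\DW(F)$ is contractible. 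I expect the only delicate step to be the passage from uniqueness of the embedding of $\bar F_V$ in $\V$ to uniqueness of $V$ in $\V$: one must check carefully that $\V'$ is genuinely determined, up to isotopy in $\V$, by $\bar F_V$ together with the choice of the side facing $F$, and that it is a compression body of exactly the type to which Lemma \ref{lemma-2-19}(\ref{lemma-2-19-1}) applies.
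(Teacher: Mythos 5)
Your proposal is correct and follows essentially the same route as the paper: both arguments show that the two-tori inner thin level forces every weak reducing pair to consist of non-separating disks, then use Lemma \ref{lemma-2-19}(\ref{lemma-2-19-1}) applied to the genus three compression body between $F$ and the thick level, together with the hypothesis on the uniqueness of the thick-level embeddings, to conclude that $\DVW(F)$ is a single $1$-simplex $\{\bar V,\bar W\}$, after which contractibility of $\D(F)$ follows from Theorem \ref{theorem-mc}. Your explicit treatment of the simplices of $\DVW(F)$ and the double application of Lemma \ref{lemma-Hatcher} only spells out what the paper leaves as ``obviously'' in the corresponding Case (\ref{as-c}) of Lemma \ref{lemma-3-4}.
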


\begin{proof}
Let us consider a weak reducing pair $(V, W)$ for $(\V,\W;F)$.
Here, $V$ and $W$ are non-separating but $\partial V\cup\partial W$ is separating in $F$ as we checked in the proof of Lemma \ref{lemma-2-17} by the assumption that the inner thin level consists of two tori.
This means that $\partial V\cup \partial W$ cuts off two twice-punctured tori from $F$.
If we consider the generalized Heegaard splitting obtained by weak reduction along $(V,W)$, then it consists of two splittings $(\V_1, \V_2;F_V)$ and $(\W_1, \W_2;F_W)$ such that $\partial_- \V_2=\partial_-\W_1=F_{VW}=T_1\cup T_2$, where each $T_i$ is a torus.

Let $\V'$ ($\W'$ resp.) be the solid between $\partial_+\V$ and $F_V$ in $\V$ ($\partial_+\W$ and $F_W$ in $\W$ resp.).
Then we can see that $V$ and $W$ are non-separating compressing disks of $\V'$ and $\W'$ respectively obviously.
Since $\V'$ and $\W'$ are genus three compression bodies with minus boundary consisting of a genus two surface, $V$ and $W$ are uniquely determined in $\V'$ and $\W'$ respectively up to isotopy.
Hence, we get the induced isotopy classes of $V$ and $W$ in $\V$ and $\W$ from the isotopy classes of $V$ and $W$ in $\V'$ and $\W'$.
Moreover, the uniqueness of the isotopy classes of the embeddings of the thick levels of the generalized Heegaard splitting obtained by weak reduction from $(\V,\W;F)$ in the relevant compression bodies forces the choice of the induced isotopy classes of $V$ and $W$ in $\V$ and $\W$ to be unique.
This means that we can consider the weak reducing pair $(V,W)$ as the fixed one $(\bar{V},\bar{W})$ even though we've chosen an arbitrary weak reducing pair.
Hence, $\DVW(F)$ is just $(\bar{V},\bar{W})$ itself.

Therefore, $\D(F)$ is contractible similarly as Case (\ref{as-c}) of Lemma \ref{lemma-3-4}.
This completes the proof.
\end{proof}

Therefore, Lemma \ref{lemma-3-1}, Lemma \ref{lemma-new}, Lemma \ref{lemma-3-4} and Lemma \ref{lemma-3-5} give Theorem \ref{theorem-main-a}.

\begin{theorem}\label{theorem-main-a}
Let $(\V,\W;F)$ be a weakly reducible, unstabilized, genus three Heegaard splitting in an orientable, irreducible $3$-manifold $M$.
If every weak reducing pair of $F$ gives the same generalized Heegaard splitting after weak reduction and the embedding of each thick level in the relevant compression body is also unique up to isotopy, then the disk complex $\D(F)$ is contractible.
Otherwise, $F$ is critical.
\end{theorem}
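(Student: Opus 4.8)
The plan is to derive the theorem by combining Lemma~\ref{lemma-3-1}, Lemma~\ref{lemma-new}, Lemma~\ref{lemma-3-4} and Lemma~\ref{lemma-3-5} according to the logical shape of the statement. Let $(P)$ denote the hypothesis that every weak reducing pair of $F$ gives the same generalized Heegaard splitting after weak reduction \emph{and} the embedding of each thick level in the relevant compression body is unique up to isotopy. Since $(\V,\W;F)$ is weakly reducible it has at least one weak reducing pair, so the universally quantified statement in $(P)$ is not vacuous. The negation of $(P)$ partitions into two exhaustive, mutually exclusive cases: (i) there exist two weak reducing pairs whose associated generalized Heegaard splittings are non-isotopic in $M$; or (ii) all weak reducing pairs give the same generalized Heegaard splitting in $M$ up to isotopy, but for some thick level (contained in $\V$ or in $\W$) the embedding into that compression body is not unique up to isotopy. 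I would treat these two cases first and the case $(P)$ afterwards.

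In case (i), Lemma~\ref{lemma-3-1} directly yields that $F$ is critical. In case (ii), suppose without loss of generality that the thick level contained in $\V$ has two non-isotopic embeddings: since each generalized Heegaard splitting obtained by weak reduction has, by construction, exactly one thick level in $\V$, namely the genus two component of $F_{V}$ for the relevant $\V$-disk, the failure of uniqueness furnishes two weak reducing pairs $(V_1,W_1)$ and $(V_2,W_2)$ whose associated thick levels $\bar{F}_{V_1}$ and $\bar{F}_{V_2}$ are non-isotopic in $\V$. This is precisely the hypothesis of Lemma~\ref{lemma-new} (with $\mathbf{H}_1$ and $\mathbf{H}_2$ permitted to coincide in $M$), so $F$ is critical; if instead the non-uniqueness occurs for the thick level in $\W$, one applies the version of Lemma~\ref{lemma-new} with the roles of $\V$ and $\W$ interchanged.

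Now assume $(P)$. Pick any weak reducing pair $(V,W)$ and let $\mathbf{H}$ be the generalized Heegaard splitting it produces by weak reduction. By the analysis in the proof of Lemma~\ref{lemma-2-17} (recalled in the discussion preceding Lemma~\ref{lemma-new}), the inner thin level $\overline{\operatorname{Thin}}(\mathbf{H})$ is either a single torus or a pair of tori, the latter occurring exactly when $V$ and $W$ are non-separating while $\partial V\cup\partial W$ separates $F$. Since $(P)$ makes $\mathbf{H}$ independent of the choice of weak reducing pair up to isotopy, precisely one of these two alternatives holds for $F$. If $\overline{\operatorname{Thin}}(\mathbf{H})$ is a single torus, Lemma~\ref{lemma-3-4} gives that $\D(F)$ is contractible; if it is a pair of tori, Lemma~\ref{lemma-3-5} gives the same conclusion. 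Together with the two cases above, this exhausts all possibilities and proves Theorem~\ref{theorem-main-a}.

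All the substantive work lives inside the four lemmas, so for this theorem I do not expect a genuine obstacle, only the bookkeeping of the case split. The one point I would spell out rather than merely cite is the reduction in case (ii): that ``non-uniqueness of the embedding of a thick level'' is automatically witnessed by an explicit pair of weak reducing pairs of exactly the form required by Lemma~\ref{lemma-new}. This is immediate once one recalls that the thick levels of a weak reduction are by definition the genus two components of $F_V$ and $F_W$, but stating it makes the hypotheses of Lemma~\ref{lemma-new} visibly satisfied.
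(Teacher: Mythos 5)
Your proposal is correct and follows exactly the paper's route: the paper derives Theorem \ref{theorem-main-a} in one line by combining Lemma \ref{lemma-3-1} (non-isotopic GHSs), Lemma \ref{lemma-new} (non-unique thick-level embedding), and Lemmas \ref{lemma-3-4}--\ref{lemma-3-5} (the two possible inner thin levels under the uniqueness hypothesis), which is precisely the case split you spell out. Your explicit bookkeeping, in particular the observation that failure of thick-level uniqueness is witnessed by a pair of weak reducing pairs satisfying the hypothesis of Lemma \ref{lemma-new}, is a faithful (and slightly more careful) rendering of what the paper leaves implicit.
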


\section*{Acknowledgments}
This research was supported by BK21 PLUS SNU Mathematical Sciences Division.

\appendix

\section{The pseudo-GHSs obtained by pre-weak reductions\label{appendix}}

In this section, we give descriptive figures of all possible cases of the pseudo-GHSs obtained by pre-weak reductions from an unstabilized, genus three Heegaard splitting $(\V,\W;F)$ of an irreducible $3$-manifold $M$, where these pre-weak reductions give GHSs of the form $(\V_1,\V_2)\cup (\W_1,\W_2)$ for $\partial_-\V_2\cap\partial_-\W_1\neq\emptyset$.
Note that we only consider the case $\partial_- \V_1=\partial_-\W_2=\emptyset$, since $\partial_- \V_1$ and $\partial_-\W_2$ do not affect the shape of $\V_2$ and $\W_1$ and the inner thin level.

\begin{enumerate}
\item The pseudo-GHSs give the GHS in (a) of Figure \ref{figure-5}, see Figure \ref{fig-wrca}. (We omit the symmetric case for the second figure.)
\begin{figure}
	\includegraphics[width=13cm]{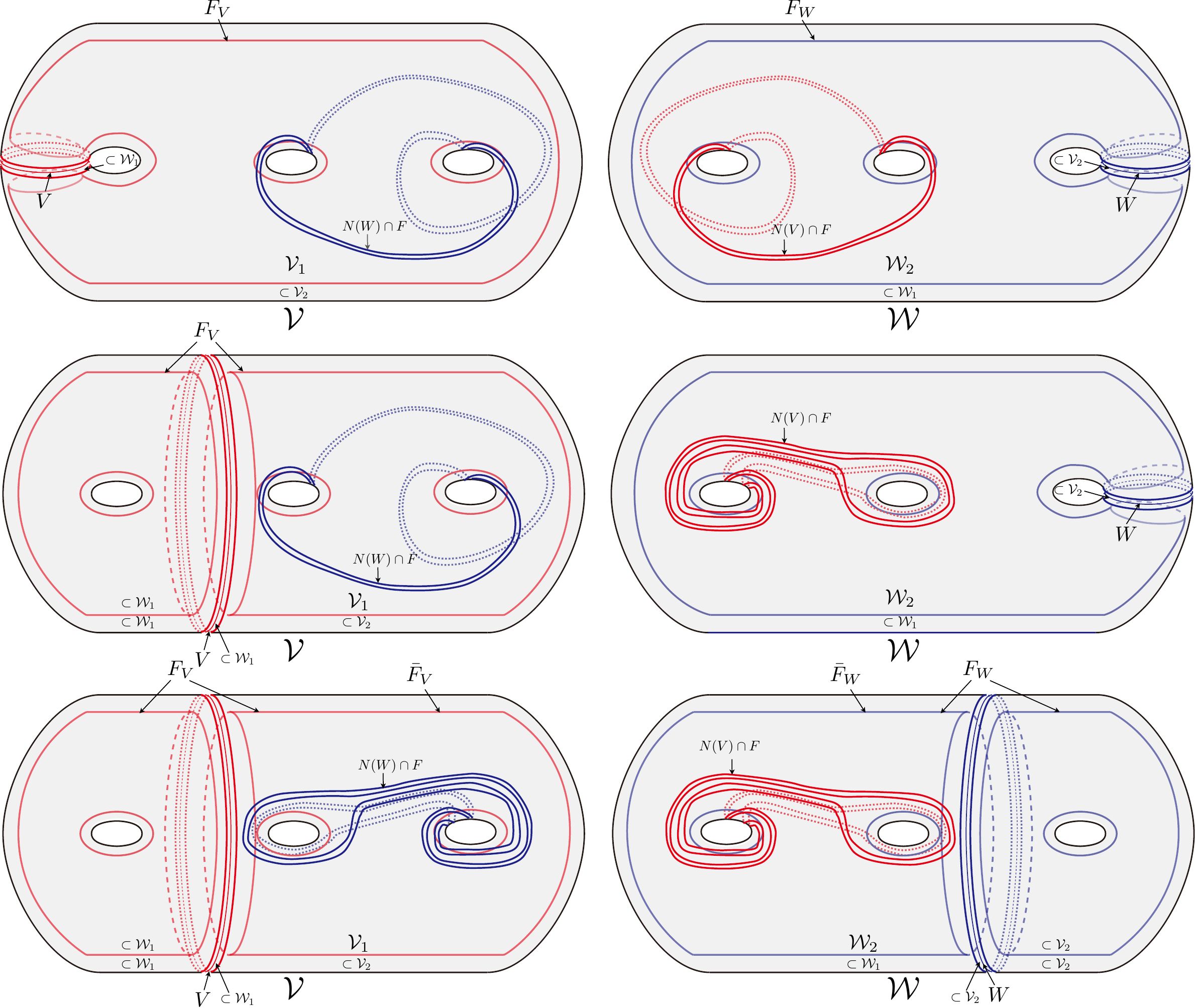}
	\caption{The pseudo-GHSs for (a) of of Figure \ref{figure-5}\label{fig-wrca}}
\end{figure}
\item The pseudo-GHSs give the GHS in (b) of Figure \ref{figure-5}, see Figure \ref{fig-wrcb}. (We omit the symmetric cases when $W$ cuts off $(\text{torus})\times I$.)
\begin{figure}
	\includegraphics[width=13cm]{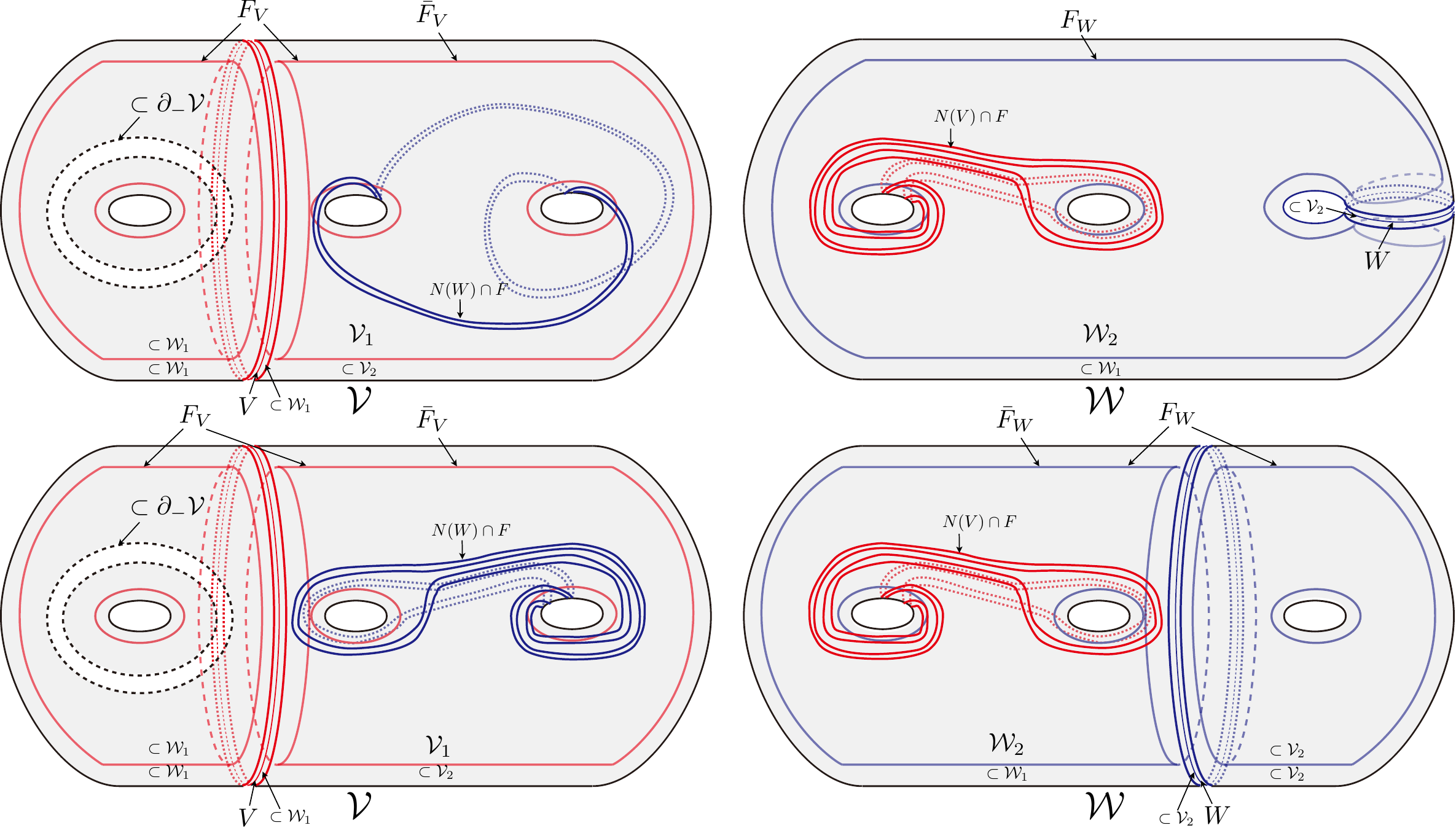}
	\caption{The pseudo-GHSs for (b) of of Figure \ref{figure-5}\label{fig-wrcb}}
\end{figure}
\item The pseudo-GHS gives the GHS in (c) of Figure \ref{figure-5}, see Figure \ref{fig-wrcc}.
\begin{figure}
	\includegraphics[width=13cm]{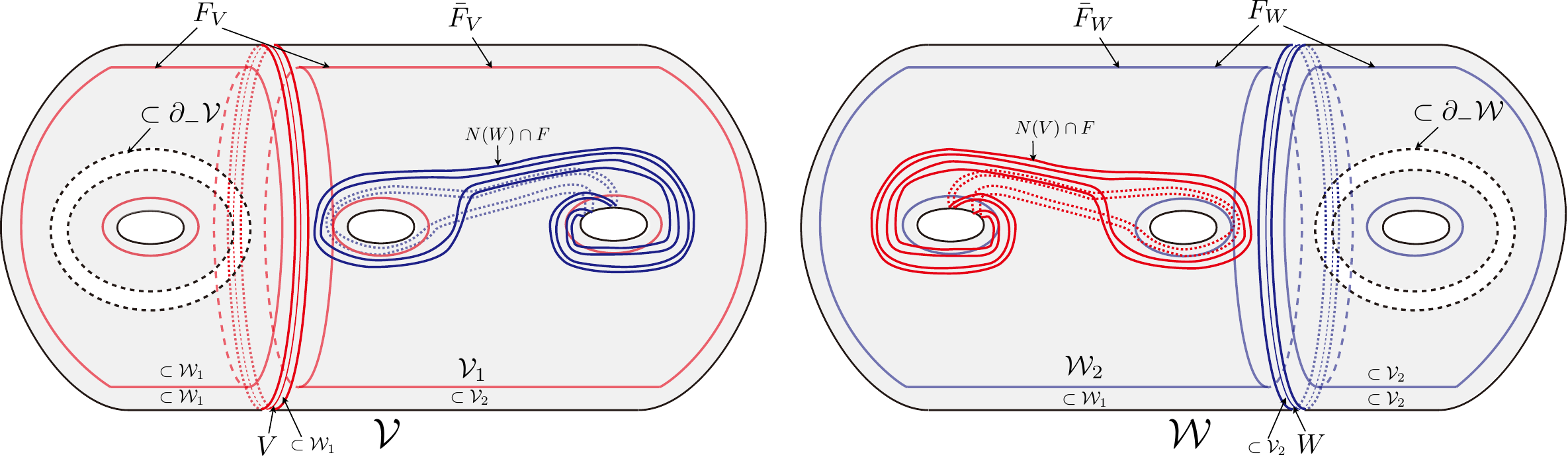}
	\caption{The pseudo-GHS for (c) of of Figure \ref{figure-5}\label{fig-wrcc}}
\end{figure}
\item The pseudo-GHS gives the GHS Figure \ref{figure-4}, see Figure \ref{fig-wrcd}.
\begin{figure}
	\includegraphics[width=13cm]{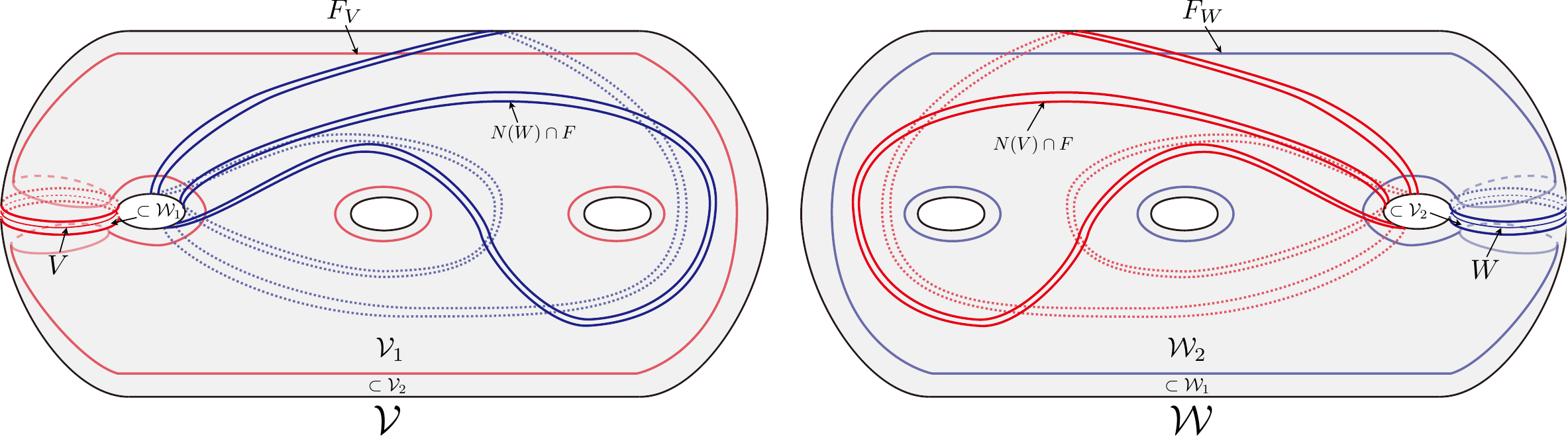}
	\caption{The pseudo-GHS for Figure \ref{figure-4}\label{fig-wrcd}}
\end{figure}
\end{enumerate}
\end{document}